\providecommand{\U}[1]{\protect\rule{.1in}{.1in}}
\newtheorem{theorem}{Theorem}[section]
\newtheorem{definition}{Definition}[section]
\newtheorem{lemma}{Lemma}[section]
\newtheorem{proposition}{Proposition}[section]
\newtheorem{remark}{Remark}[section]
\newcommand{\rst}[1]{\ensuremath{\raise-1.0ex\hbox{\large{$\vert_{#1}$}}}}
\begin{document}

\title[Existence for the generalized Navier-Stokes equations with damping]{Existence of weak solutions for the generalized Navier-Stokes equations with damping}

\date{\textbf{September 23, 2011}}

\author[H.B. de Oliveira]{H.B. de Oliveira$^{\ast,\ast\ast}$}

\email{holivei@ualg.pt}

\address{$^{\ast}$FCT - Universidade do Algarve,
Campus de Gambelas,
8005-139 Faro, Portugal}
\address{$^{\ast\ast}$CMAF - Universidade de Lisboa,
Av. Prof. Gama Pinto, 2,
1649-003 Lisboa, Portugal.}

\thanks{The author's work was supported by the grant SFRH/BSAB/1058/2010, MCTES, Portugal, and by the research project PTDC/MAT/110613/2010, FCT, Portugal.}

\begin{abstract}
In this work we consider the generalized Navier-Stoke equations with the presence of a damping term in the momentum equation. %
The problem studied here derives from the set of equations which govern the isothermal flow of incompressible, homogeneous and non-Newtonian fluids. %
For the generalized Navier-Stokes problem with damping, we prove the existence of weak solutions by using regularization techniques, the theory of monotone operators and compactness arguments together with the local decomposition of the pressure and the Lipschitz-truncation method.
The existence result proved here holds for any $q>\frac{2N}{N+2}$ and any $\sigma>1$, where $q$ is the exponent of the diffusion term and $\sigma$ is the exponent which characterizes the damping term.

\bigskip
\noindent\textbf{Keywords and phrases:} generalized Navier-Stokes, damping, existence of weak solutions, decomposition of the pressure, Lipschitz truncation.

\bigskip
\noindent\textbf{MSC 2010:} 35D05, 35K55, 35Q30, 76D03, 76D05.

\end{abstract}

\maketitle

\selectlanguage{english}

%---------------------------------------------------------------------------
%Insert here the title, affiliations and abstract:

%----------classification, keywords, date
%\author{}

%\date{}

%\date{}
%----------additions
%\dedicatory{To my parents}
%%% ----------------------------------------------------------------------

\section{Introduction}\label{Sect-Int}

In this work, we shall study the existence of weak solutions for the generalized Navier-Stokes equations with damping:
\begin{equation}\label{geq1-inc}
\mathrm{div}\,\mathbf{u}=0\quad\mbox{in}\quad Q_T,
\end{equation}
\begin{equation}\label{geq1-vel}
\frac{\partial\,\mathbf{u}}{\partial\,t}
-\mathbf{div}\left(|\mathbf{\nabla}\mathbf{u}|^{q-2}\mathbf{\nabla}\mathbf{u}-
\mathbf{u}\otimes\mathbf{u}\right)
+\alpha|\mathbf{u}|^{\sigma-2}\mathbf{u}
=\mathbf{f}-\mathbf{\nabla}p
 \quad\mbox{in}\quad Q_T;
\end{equation}
supplemented with the following initial and boundary conditions:
\begin{equation}\label{geq1-ic}
\mathbf{u}=\mathbf{u}_0\qquad\mbox{in}\quad\Omega \quad\mbox{for}\quad t=0,
\end{equation}
\begin{equation}\label{geq1-bc}
\mathbf{u}=\mathbf{0} \qquad\mbox{on}\quad \Gamma_T.
\end{equation}
Here $Q_T$ is a general cylinder defined by
$$Q_T:=\Omega\times(0,T),\quad\mbox{with}\quad \Gamma_T:=\partial\Omega\times(0,T),$$
where
$\Omega\subset\mathds{R}^N$, $N\geq 2$, is a bounded domain with a compact boundary $\partial\Omega$, and $0<T<\infty$. %

In the scope of Mathematical Fluid Mechanics, $\mathbf{u}$ is the velocity field, $p$ stands for the pressure divided by
the constant density, $\mathbf{f}$ is the given forcing term and $q>1$ is the constant exponent which characterizes the flow. %
The constant $\alpha$ is non-negative and $\sigma>1$ is another constant. %

The damping term $\alpha|\mathbf{u}|^{\sigma-2}\mathbf{u}$, or sometimes called absorption term, has no direct physical justification in Fluid Mechanics, although it might be considered has being part of the external body forces field (see~\cite{CRM-2002}--\cite{RLMA-2004}). %
There is also a precise theory of the absorption of forced plane infinitesimal waves according to the Navier-Stokes equations (see~\cite{True-1953}).
The consideration of damping terms in the generalized Navier-Stokes equations it is also useful as a regularization procedure to prove the existence of weak solutions for the stationary problems (see \cite{FMS-1997}--\cite{FMS-2003}).
At last, but not in last, there is also the purely mathematical motivation which goes back to a work about a stationary like problem (see~\cite{BBC-1975}), where the authors where mainly interested with the important question about compact supported solutions for that problem. %
During the last years, many authors have worked on these kind of modified Navier-Stokes type problems, establishing the existence of weak solutions and proving many other properties has the uniqueness of weak solutions, their regularity and studying its asymptotic behavior. %
In~\cite{AMFM-2010} we proved the weak solutions of (\ref{geq1-inc})-(\ref{geq1-bc}) extinct in a finite time for $q\geq 2$, provided $1<\sigma<2$. This property is well known for the generalized Navier-Stokes problem  (\ref{geq1-inc})-(\ref{geq1-bc}) with $\alpha=0$ in the case $1<q<2$. But for $q\geq 2$ the best one can gets are some decays of fractional and exponential order (see \emph{e.g.}~\cite{Bae-1999}). In~\cite{AA-2010} we have studied the problem (\ref{geq1-inc})-(\ref{geq1-bc}) in the particular case of $q=2$. There, we have proved the existence of weak solutions, its uniqueness and some asymptotic properties. We carried out an analogous study in~\cite{JMAA-2011} for the Oberbeck-Boussinesq version of this problem, where besides the usual coupling in the buoyancy force, we have considered an extra coupling in the damping term by considering a temperature-depending function $\sigma$. In~\cite{CJ-2008} the authors have proved the existence of weak and strong solutions for the Cauchy problem (\ref{geq1-inc})-(\ref{geq1-bc}) in $\mathds{R}^3$ and with $q=2$. The damping term is being considered in the context of many other physical systems which go from the Shrödinger equations (see \emph{e.g.}~\cite{CG-2011})  to the Euler equations (see \emph{e.g.}~\cite{PZ-2008}) and passing by the wave equation (see \emph{e.g.}~\cite{Zhou-2005}).

With respect to the existence of weak solutions for the original generalized Navier-Stokes problem, \emph{i.e.} (\ref{geq1-inc})-(\ref{geq1-bc}) with $\alpha=0$, the problem was solved in its all full possible (it is open only the case $1<q<\frac{2N}{N+2}$) extension recently in the work \cite{DRW-2010}.
The first existence result to this problem was achieved in~\cite{L-1967} and~\cite{Lions-1969} for $q\geq \frac{3N+2}{N+2}$.
Only more or less 40 years later it was possible to improve the existence result for lower values of $q$.
In~\cite{Zhikov-2009}, under the same assumptions of~\cite{L-1967} and~\cite{Lions-1969}, it was proved the existence of weak solutions to the problem (\ref{geq1-inc})-(\ref{geq1-bc}) with $\alpha=0$ for $q\geq \max\left\{\frac{3N}{N+2},\frac{N+\sqrt{3N^2+4N}}{N+2}\right\}$.
A lit bit earlier to the work~\cite{Zhikov-2009}, it was proved in~\cite{Wolf-2007} the existence of a weak solution to the same problem for $q>2\frac{N+1}{N+2}$.
Finally in~\cite{DRW-2010} the authors have extended the result~\cite{Wolf-2007} to the case $q>\frac{2N}{N+2}$.
It is an open problem to prove the existence of weak solutions to the problem (\ref{geq1-inc})-(\ref{geq1-bc}) with $\alpha=0$ if
$1< q\leq \frac{2N}{N+2}$ in the case of $N>2$.
On the other hand, it seems to be very difficult to go bellow the limit $q=\frac{2N}{N+2}$ (for $N>2$), due to the need of using the compact imbedding $\mathbf{W}^{1,q}(\Omega)\hookrightarrow \mathbf{L}^2(\Omega)$.

The plan of this work is the following.
In Section~\ref{Sect-Int} we introduce the problem we shall study here and review some results related with our work.
Section~\ref{Sect-WF} is devoted to introduce the notation we use throughout the work and to define the notion of weak solution we shall consider.
Here, we also shall state the main result of this paper: Theorem~\ref{th-exst-wsqv}. The proof of this result is carried out from Section~\ref{Sect-ERP} to Section~\ref{Sect-Conv-EST}. In Section~\ref{Sect-Rem}, we make some remarks about our work, in special, its extensions and limitations.

\section{Weak formulation}\label{Sect-WF}

The notation used throughout this article is largely
standard in  Mathematical Fluid Mechanics - see \emph{e.g.}~\cite{Lions-1969}. %
We distinguish tensors and vectors from scalars by using boldface letters. For functions and function spaces we will
use this distinction as well. %
The symbol $C$ will denote a generic constant - generally a positive one, whose value will not
be specified; it can change from one inequality to another. %
The dependence of $C$ on other constants or parameters will always be clear from the exposition. %
In this paper, the notations $\Omega$ or $\omega$ stand always for a domain, \emph{i.e.}, a connected open subset of $\mathds{R}^N$, $N\geq 1$.
By $\mathcal{L}_N(\omega)$ we denote the $N$-dimensional Lesbesgue measure of $\omega$.
Given $k\in\mathds{N}$, we denote by $\mathrm{C}^{k}(\Omega)$ the space of all $k$-differentiable functions in $\Omega$. By $\mathrm{C}^{\infty}_0(\Omega)$ or $\mathcal{D}(\Omega)$, we denote the space of all infinity-differentiable functions with compact support in $\Omega$. The space of distributions over $\mathcal{D}(\Omega)$ is denoted by $\mathcal{D}'(\Omega)$. %
If $\mathrm{X}$ is a generic Banach space, its dual space is denoted by $\mathrm{X}'$.
Let $1\leq q\leq \infty$ and $\Omega\subset\mathds{R}^N$, with $N\geq 1$, be a domain. %
We will use the classical Lebesgue spaces $\mathrm{L}^q(\Omega)$, whose norm is
denoted by $\|\cdot\|_{\mathrm{L}^q(\Omega)}$. %
For any nonnegative $k$,
$\mathrm{W}^{k,q}(\Omega)$ denotes the Sobolev space of all
functions $u\in\mathrm{L}^q(\Omega)$ such
that the weak derivatives $\mathrm{D}^{\alpha}u$ exist, in the generalized sense, and are in
$\mathrm{L}^q(\Omega)$ for any multi-index $\alpha$ such that
$0\leq |\alpha|\leq k$. The norm in $\mathrm{W}^{k,q}(\Omega)$ is denoted by
$\|\cdot\|_{\mathrm{W}^{k,q}(\Omega)}$.
The corresponding spaces of vector-valued or tensor-valued
functions are denoted by boldface letters. All these spaces are
Banach spaces and the Hilbert framework corresponds to $q=2$.
In the last case, we use the abbreviation $\mathrm{W}^{k,2}=\mathrm{H}^{k}$. %
Given $T>0$ and a Banach space $X$,
$\mathrm{L}^q(0,T;X)$ and $\mathrm{W}^{k,q}(0,T;X)$ denote
the usual Bochner spaces used in evolutive problems,
with norms denoted by $\|\cdot\|_{\mathrm{L}^q(0,T;X)}$ and
$\|\cdot\|_{\mathrm{W}^{k,q}(0,T;X)}$. %
By $\mathrm{C}_{\mathrm{w}}([0,T];X)$ we denote the subspace of $\mathrm{L}^{\infty}(0,T;X)$ consisting of functions which are weakly continuous
from $[0,T]$ into $X$.

A very important property satisfied by the tensor $\mathbf{S}=|\mathbf{\nabla\,u}|^{q-2}\mathbf{\nabla\,u}$ and by the damping term $|\mathbf{u}|^{\sigma-2}\mathbf{u}$ are expressed in the following lemma which proof we address the reader to~\cite{BL-1994}.
\begin{lemma}
For all $s\in(1,\infty)$ and $\delta\geq 0$, there exist constants $C_1$ and $C_2$, depending on $s$ and $N$,
such that for all $\textbf{$\xi$},\ \textbf{$\eta$}\in\mathds{R}^N$, $N\geq 1$,
\begin{equation}\label{gin-xi-eta}
\left(|\textbf{$\xi$}|^{s-2}\textbf{$\xi$}-|\textbf{$\eta$}|^{s-2}\textbf{$\eta$}\right)\cdot(\textbf{$\xi$}-\textbf{$\eta$})\geq
C_2|\textbf{$\xi$}-\textbf{$\eta$}|^{2+\delta}\left(|\textbf{$\xi$}|+|\textbf{$\eta$}|\right)^{s-2-\delta}\,.
\end{equation}
\end{lemma}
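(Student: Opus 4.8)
\emph{Proof strategy.} The plan is to derive \eqref{gin-xi-eta} from its special case $\delta=0$ and then to prove that case by integrating the Jacobian of $\mathbf{g}(\zeta):=|\zeta|^{s-2}\zeta$ along the segment joining $\eta$ to $\xi$. The reduction in $\delta$ is immediate: if $\xi=\eta=0$ both sides of \eqref{gin-xi-eta} vanish, while if $|\xi|+|\eta|>0$ then, since $|\xi-\eta|\le|\xi|+|\eta|$ and $\delta\ge0$,
\begin{equation*}
|\xi-\eta|^{2+\delta}\left(|\xi|+|\eta|\right)^{s-2-\delta}
=|\xi-\eta|^{2}\left(|\xi|+|\eta|\right)^{s-2}\left(\frac{|\xi-\eta|}{|\xi|+|\eta|}\right)^{\!\delta}
\le|\xi-\eta|^{2}\left(|\xi|+|\eta|\right)^{s-2},
\end{equation*}
so it suffices to treat $\delta=0$, with a constant depending only on $s$.

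For $s>1$ the map $\mathbf{g}$ is continuous on $\mathds{R}^N$ and of class $C^1$ away from the origin, with $\mathrm{D}\mathbf{g}(\zeta)\mathbf{h}\cdot\mathbf{h}=|\zeta|^{s-2}|\mathbf{h}|^{2}+(s-2)|\zeta|^{s-4}(\zeta\cdot\mathbf{h})^{2}$ for $\mathbf{h}\in\mathds{R}^N$. Using the Cauchy--Schwarz bound $(\zeta\cdot\mathbf{h})^{2}\le|\zeta|^{2}|\mathbf{h}|^{2}$ one gets $\mathrm{D}\mathbf{g}(\zeta)\mathbf{h}\cdot\mathbf{h}\ge c_s\,|\zeta|^{s-2}|\mathbf{h}|^{2}$ with $c_s:=\min\{1,s-1\}>0$. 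Next I would set $\zeta_t:=\eta+t(\xi-\eta)$ and observe that $t\mapsto\mathbf{g}(\zeta_t)$ is absolutely continuous on $[0,1]$: the only possible singularity of $\mathrm{D}\mathbf{g}(\zeta_t)$ is at an isolated $t_0$ with $\zeta_{t_0}=0$, and there $|\zeta_t|=|t-t_0|\,|\xi-\eta|$, so $|\mathrm{D}\mathbf{g}(\zeta_t)|$ is of order $|t-t_0|^{s-2}$, which is integrable because $s>1$. The fundamental theorem of calculus then yields
\begin{equation*}
\left(|\xi|^{s-2}\xi-|\eta|^{s-2}\eta\right)\cdot(\xi-\eta)
=\int_0^1\mathrm{D}\mathbf{g}(\zeta_t)(\xi-\eta)\cdot(\xi-\eta)\,dt
\ge c_s\,|\xi-\eta|^{2}\int_0^1|\zeta_t|^{s-2}\,dt.
\end{equation*}

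It remains to prove the elementary estimate
\begin{equation}\label{aux-int}
\int_0^1\left|\eta+t(\xi-\eta)\right|^{\gamma}\,dt\ \ge\ c(\gamma)\left(|\xi|+|\eta|\right)^{\gamma},\qquad \gamma:=s-2>-1,
\end{equation}
with $c(\gamma)>0$. For $\gamma\le0$ this is trivial, since $|\zeta_t|\le(1-t)|\eta|+t|\xi|\le|\xi|+|\eta|$ forces $|\zeta_t|^{\gamma}\ge(|\xi|+|\eta|)^{\gamma}$. For $\gamma>0$, by homogeneity I may normalise $|\xi|+|\eta|=1$ and then either invoke compactness — the left side of \eqref{aux-int} is continuous (dominated convergence, the integrand being bounded by $1$) and strictly positive on the compact set $\{|\xi|+|\eta|=1\}$, hence bounded below by a positive constant — or argue geometrically: if the segment $[\eta,\xi]$ stays at distance $\ge\tfrac12\max\{|\xi|,|\eta|\}$ from the origin then $|\zeta_t|$ is comparable to $|\xi|+|\eta|$ for every $t$, and otherwise $|\xi-\eta|$ is comparable to $|\xi|+|\eta|$ while $|\zeta_t|\ge|t-t_0|\,|\xi-\eta|$ stays large on a set of $t$ of measure at least $\tfrac12$. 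Combining \eqref{aux-int} with the previous display proves \eqref{gin-xi-eta} for $\delta=0$ with $C_2=c_s\,c(s-2)$, and the first paragraph gives the general case; the constant is in fact independent of $N$, and the companion constant $C_1$ is produced symmetrically from the matching upper bounds (replacing $c_s$ by $\max\{1,s-1\}$ and $c(\gamma)$ by the obvious upper estimate for the integral in \eqref{aux-int}). The one point I expect to need genuine care is the case $\gamma>0$ of \eqref{aux-int}; everything else is routine bookkeeping.
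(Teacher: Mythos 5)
The paper itself gives no proof of this lemma: it simply refers the reader to Barrett and Liu (\cite{BL-1994}). Your argument is the standard one used to establish this type of monotonicity inequality for the $p$-Laplacian vector field, and it is correct. The reduction to $\delta=0$ via $|\xi-\eta|\le|\xi|+|\eta|$ is sound; the Jacobian computation $\mathrm{D}\mathbf{g}(\zeta)\mathbf{h}\cdot\mathbf{h}=|\zeta|^{s-2}|\mathbf{h}|^2+(s-2)|\zeta|^{s-4}(\zeta\cdot\mathbf{h})^2\ge\min\{1,s-1\}\,|\zeta|^{s-2}|\mathbf{h}|^2$ (using Cauchy--Schwarz in the subquadratic case $1<s<2$) is correct; the absolute continuity of $t\mapsto\mathbf{g}(\zeta_t)$ is justified precisely because $s-2>-1$; and the auxiliary estimate \eqref{aux-int} is handled properly in both regimes (for $\gamma\le0$ by the pointwise bound $|\zeta_t|\le|\xi|+|\eta|$, for $\gamma>0$ by homogeneity plus compactness or the geometric observation about the segment). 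Two small remarks. First, your geometric alternative for $\gamma>0$ can be made completely explicit without compactness: assuming WLOG $|\xi|\ge|\eta|$, one has for $t\in[3/4,1]$ the reverse-triangle bound $|\zeta_t|\ge t|\xi|-(1-t)|\eta|\ge\tfrac12|\xi|\ge\tfrac14(|\xi|+|\eta|)$, whence $c(\gamma)=4^{-(\gamma+1)}$ works; this is both shorter and manifestly $N$-independent. Second, your parenthetical claim that the constant is independent of $N$ is correct, but if you use the compactness route it needs the additional (easy) remark that the integral in \eqref{aux-int} depends on $(\xi,\eta)$ only through $|\xi|$, $|\eta|$, $\xi\cdot\eta$, so one may reduce to $N=2$ by rotational invariance; the explicit geometric bound avoids this issue entirely. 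The stray constant $C_1$ in the lemma statement is indeed a leftover from the two-sided Barrett--Liu estimate, and your interpretation of it as the matching upper bound is the natural one.
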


In order to define the notion of weak solutions we shall look for, let us introduce the usual functional setting of Mathematical Fluid Mechanics:
\begin{equation}\label{V-reg}
\mathcal{V}:=\{\mathbf{v}\in\mathbf{C}_0^{\infty}(\Omega):\mathrm{div\,}\mathbf{v}=0\}\,;
\end{equation}
\begin{equation}\label{H-div}
\mathbf{H}:=\mbox{closure of $\mathcal{V}$ in $\mathbf{L}^2(\Omega)$}\,;
\end{equation}
\begin{equation}\label{V-q=const}
\mathbf{V}_q:=\mbox{closure of $\mathcal{V}$ in $\mathbf{W}^{1,q}(\Omega)$}\,.
\end{equation}
The weak solutions we are interested in are usually called in the sense of Leray-Hopf.

\begin{definition}\label{weak-sol-vq}
Let $N \geq 2$ and $1<q,\ \sigma<\infty$. %
Assume that $\mathbf{u}_0\in \mathbf{H}$, $\mathbf{f}\in\mathbf{L}^{1}(Q_T)$ and let conditions (A)-(D) be fulfilled for any $q>1$.
A vector field $\mathbf{u}$ is a weak solution to the problem (\ref{geq1-inc})-(\ref{geq1-bc}), if:
\begin{enumerate}
\item $\mathbf{\mathbf{u}}\in  \mathrm{L}^{\infty}(0,T;\mathbf{H})\cap \mathrm{L}^q(0,T;\mathbf{V}_q)\cap\mathbf{L}^{\sigma}(Q_T)$;
\item For every $\varphi\in\mathbf{C}^{\infty}(Q_T)$, with $\mathrm{div\,}\varphi=0$ and $\mathrm{supp\,}\varphi\subset\subset\Omega\times[0,T)$,
\begin{equation*}
\begin{split}
& -\int_{Q_T}\mathbf{u}\cdot\varphi_t\,d\mathbf{x}dt+
\int_{Q_T}\left(|\mathbf{\nabla}\mathbf{u}|^{q-2}\mathbf{\nabla}\mathbf{u}-\mathbf{u}\otimes\mathbf{u}\right):\mathbf{\nabla\varphi}\,d\mathbf{x}dt+
\alpha\int_{Q_T}|\mathbf{u}|^{\sigma-2}\mathbf{u}\cdot\varphi\,d\mathbf{x}dt
 \\
& =\int_{Q_T}\mathbf{f}\cdot\varphi\,d\,\mathbf{x}dt+\int_{\Omega}\mathbf{u}_0\cdot\varphi(0)\,d\mathbf{x}.
\end{split}
\end{equation*}
\end{enumerate}
\end{definition}

\begin{remark}
Note that $\mathbf{V}_q\hookrightarrow\mathbf{L}^{\sigma}(\Omega)$ for $\sigma\leq q^{\ast}$, where $q^{\ast}$ is the Sobolev conjugate of $q$, \emph{i.e.} $q^{\ast}=\frac{Nq}{N-q}$ if $1<q<N$, or $q^{\ast}=\infty$ if $q\geq N$.
Therefore, in this case, we look for weak solutions in the class $\mathrm{L}^{\infty}(0,T;\mathbf{H})\cap L^q(0,T;\mathbf{V}_q)$.
\end{remark}

\noindent The main result of this work is the following, where it is established the existence of weak solutions to the problem (\ref{geq1-inc})-(\ref{geq1-bc}) under the minor possible assumptions on $q$ and $\sigma$. We left open only the case of $1<q\leq\frac{2N}{N+2}$, for $N>2$, which will certainly require a different approach.

\begin{theorem}\label{th-exst-wsqv}
Let $\Omega$ be a bounded domain in $\mathds{R}^{N}$, $N\geq 2$, with a Lipschitz-continuous boundary $\partial\Omega$. %
Assume that
\begin{equation}\label{h1-f}
\mathbf{f}\in\mathbf{L}^{q'}(0,T;\mathbf{V}_q'),
\end{equation}
\begin{equation}\label{eq1-vel-u0}
\mathbf{u}_0\in \mathbf{H}.
\end{equation}
Then, if
\begin{equation}\label{exi-c-q}
q>\frac{2N}{N+2},
\end{equation}
there exists a weak solution to the problem (\ref{geq1-inc})-(\ref{geq1-bc}), in the sense of Definition~\ref{weak-sol-vq}, for any $\sigma>1$.
Moreover, any weak solution $\mathbf{u}\in\mathrm{C}_{\mathrm{w}}([0,T];\mathbf{H})$.
\end{theorem}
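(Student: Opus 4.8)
\emph{Outline of the proof.} The proof follows the route that has become standard for this family of problems: construct the solution as the limit of an approximating scheme, extract compactness from the energy estimate, and then identify the weak limits of the three nonlinear terms --- the $q$-Laplacian stress $\mathbf{S}=|\mathbf{\nabla}\mathbf{u}|^{q-2}\mathbf{\nabla}\mathbf{u}$, the convective term $\mathbf{u}\otimes\mathbf{u}$ and the damping term $|\mathbf{u}|^{\sigma-2}\mathbf{u}$. The hypothesis $q>\frac{2N}{N+2}$ is used at precisely one point, to guarantee the compact embedding $\mathbf{W}^{1,q}(\Omega)\hookrightarrow\hookrightarrow\mathbf{L}^{2}(\Omega)$, which --- together with a bound on the time derivative --- upgrades the weak convergence of the approximations in $\mathrm{L}^{q}(0,T;\mathbf{V}_q)$ to strong convergence in $\mathbf{L}^{2}(Q_T)$ through the Aubin--Lions--Simon lemma. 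The damping exponent $\sigma>1$ is immaterial to solvability: since $|\mathbf{u}|^{\sigma-2}\mathbf{u}$ is monotone, only its boundedness in $\mathbf{L}^{1}(Q_T)$ and an almost-everywhere convergence are needed in order to pass to the limit in it, and these are always available.

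First I would introduce a regularized problem in which the convective term is tamed --- for instance by mollifying it in space, or by adding an artificial higher-order dissipation $\varepsilon\,\mathbf{div}(|\mathbf{\nabla}\mathbf{u}|^{r-2}\mathbf{\nabla}\mathbf{u})$ with $r$ large --- and solve it by a Galerkin scheme in a basis of $\mathbf{V}_q$, the finite-dimensional system being solvable by Carath\'eodory's theorem. Testing with the solution itself, so that the divergence-free constraint annihilates the convective contribution and the forcing is absorbed by means of (\ref{h1-f}) and Young's inequality, and then invoking Gronwall's lemma, yields bounds uniform in all the approximation parameters:
\[
\mathbf{u}\in\mathrm{L}^{\infty}(0,T;\mathbf{H})\cap\mathrm{L}^{q}(0,T;\mathbf{V}_q)\cap\mathbf{L}^{\sigma}(Q_T),
\]
while the equation provides a uniform bound for $\partial_t\mathbf{u}$ in a negative Sobolev--Bochner space. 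By the compactness recalled above one then extracts a subsequence with $\mathbf{u}_{\varepsilon}\to\mathbf{u}$ strongly in $\mathbf{L}^{2}(Q_T)$ and a.e. in $Q_T$; hence $\mathbf{u}_{\varepsilon}\otimes\mathbf{u}_{\varepsilon}\to\mathbf{u}\otimes\mathbf{u}$ (in $\mathbf{L}^{1}(Q_T)$, and in a better space by interpolating the strong $\mathbf{L}^{2}$-convergence against the $\mathrm{L}^{\infty}(0,T;\mathbf{H})$ and $\mathrm{L}^{q}(0,T;\mathbf{V}_q)$ bounds) and $|\mathbf{u}_{\varepsilon}|^{\sigma-2}\mathbf{u}_{\varepsilon}\to|\mathbf{u}|^{\sigma-2}\mathbf{u}$ by Vitali's theorem. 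It remains to prove that the weak $\mathbf{L}^{q'}(Q_T)$-limit $\overline{\mathbf{S}}$ of $\mathbf{S}_{\varepsilon}:=|\mathbf{\nabla}\mathbf{u}_{\varepsilon}|^{q-2}\mathbf{\nabla}\mathbf{u}_{\varepsilon}$ coincides with $|\mathbf{\nabla}\mathbf{u}|^{q-2}\mathbf{\nabla}\mathbf{u}$.

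This identification is the heart of the matter, and the step I expect to be the main obstacle. Since $\mathbf{u}$ is not itself an admissible test function, and the pressure is a priori only a distribution whose parts associated with $\mathbf{S}$, with $\mathbf{u}\otimes\mathbf{u}$ and with the damping term have mutually different integrabilities on a merely Lipschitz domain, I would use Wolf's local decomposition of the pressure: on interior subcylinders, rewrite the momentum equation in a form involving $\mathbf{u}-\mathbf{\nabla}h$ and split $p=p_{\mathbf{S}}+p_{\mathbf{u}\otimes\mathbf{u}}+p_{\mathrm{damp}}+p_{\mathrm{harm}}$, each summand being controlled in its natural space by Calder\'on--Zygmund and Bogovskii-type estimates. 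Localizing with a cut-off and subtracting the limit equation, one then applies the Lipschitz-truncation method to a pressure-corrected version of $\mathbf{w}_{\varepsilon}:=\mathbf{u}_{\varepsilon}-\mathbf{u}$: truncated at a height $\lambda$ it becomes Lipschitz off a set of small measure, hence admissible as a test function, and inserting it into the difference of the equations one shows that the time-derivative, convective, damping and pressure contributions are negligible as $\varepsilon\to0$ and then $\lambda\to\infty$, which leaves
\[
\int_{Q'}\bigl(\mathbf{S}_{\varepsilon}-|\mathbf{\nabla}\mathbf{u}|^{q-2}\mathbf{\nabla}\mathbf{u}\bigr):\bigl(\mathbf{\nabla}\mathbf{u}_{\varepsilon}-\mathbf{\nabla}\mathbf{u}\bigr)\,\theta\,d\mathbf{x}dt\longrightarrow 0
\]
for a suitable weight $\theta\geq0$. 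The monotonicity inequality (\ref{gin-xi-eta}) with $s=q$ --- with $\delta=0$ when $q\geq2$, and with a small $\delta>0$, compensated by H\"older's inequality against the uniform gradient bound, when $q<2$ --- then forces $\mathbf{\nabla}\mathbf{u}_{\varepsilon}\to\mathbf{\nabla}\mathbf{u}$ a.e. in $Q_T$, whence $\overline{\mathbf{S}}=|\mathbf{\nabla}\mathbf{u}|^{q-2}\mathbf{\nabla}\mathbf{u}$.

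With all the nonlinear terms identified, passing to the limit in the regularized weak formulation yields the identity of Definition~\ref{weak-sol-vq}; the initial datum is attained thanks to the bound on $\partial_t\mathbf{u}$, and $\mathbf{u}\in\mathrm{C}_{\mathrm{w}}([0,T];\mathbf{H})$ follows from $\mathbf{u}\in\mathrm{L}^{\infty}(0,T;\mathbf{H})$ together with the temporal regularity of $\mathbf{u}$ and a standard density argument. The real difficulty throughout is making the Lipschitz truncation simultaneously compatible with the weak time-regularity of $\partial_t\mathbf{u}$ and with the local pressure decomposition on a Lipschitz domain, uniformly over the whole range $q>\frac{2N}{N+2}$; by contrast the damping term never obstructs the argument --- being monotone and only $\mathbf{L}^{1}$-bounded a priori, it is handled entirely by the compactness of the second step, which is why the conclusion holds for every $\sigma>1$.
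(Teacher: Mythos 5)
Your outline follows the paper's argument essentially step for step: regularize the convective term, derive uniform energy bounds, use Aubin--Lions compactness (the one place where $q>\frac{2N}{N+2}$ enters) to identify the convective and damping nonlinearities, and then combine Wolf's local pressure decomposition with the parabolic Lipschitz truncation of \cite{DRW-2010} to identify the extra stress. The only cosmetic divergences from the paper's proof are that it solves the regularized problem via a Schauder fixed point on a frozen-coefficient map rather than a Galerkin scheme, regularizes the convective term by the cutoff $\mathbf{u}_{\epsilon}\otimes\mathbf{u}_{\epsilon}\Phi_{\epsilon}(|\mathbf{u}_{\epsilon}|)$ rather than by mollification or added dissipation, and closes via the local Minty trick rather than by extracting a.e.\ convergence of gradients from the strict monotonicity inequality --- all interchangeable standard devices within the same architecture; your observation that $\sigma>1$ is unconstrained because the damping term is monotone and passes to the limit by a.e.\ plus Vitali/dominated convergence matches the paper's reasoning exactly.
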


\noindent In order to simplify the exposition, we shall assume throughout the rest of this work the following simplified assumption of (\ref{h1-f})
\begin{equation}\label{h-f}
\mathbf{f}=-\mathbf{div}\,\mathbf{F},\quad \mathbf{F}\in\mathbf{L}^{q'}(Q_T).
\end{equation}

\noindent The main ingredients of the proof of Theorem~\ref{th-exst-wsqv} are the results of the local decomposition of the pressure established in \cite{Wolf-2007} and the Lipschitz--truncation method in the spirit of \cite{DRW-2010}. %
With could also have considered the $\mathrm{L}^{\infty}$--truncation method used in \cite{Wolf-2007}, but by this method we cannot achieve an existence result for so lower values of $q$ as we can with the  Lipschitz--truncation method. %
The proof of Theorem~\ref{th-exst-wsqv} will be the aim of the next sections. %

\section{The regularized problem}\label{Sect-ERP}

We start the proof of Theorem~\ref{th-exst-wsqv} by considering a regularization of the problem (\ref{geq1-inc})-(\ref{geq1-bc}) which basically gets rid off the difficulties coming from the convective term $\mathbf{u}\otimes\mathbf{u}$. %
Let $\Phi\in \mathrm{C}^{\infty}([0,\infty))$ be a non-increasing function such that $0\leq\Phi\leq 1$ in $[0,\infty)$, $\Phi\equiv 1$ in $[0,1]$, $\Phi\equiv 0$ in $[2,\infty)$ and $0\leq -\Phi'\leq 2$. %
For $\epsilon>0$, we set
\begin{equation}\label{Phi-e}
\Phi_{\epsilon}(s):=\Phi(\epsilon s),\quad s\in[0,\infty),
\end{equation}
and let us consider the following regularized problem:
\begin{equation}\label{eq2-inc-e}
\mathrm{div}\,\mathbf{u}_{\epsilon}=0\quad\mbox{in}\quad Q_T,
\end{equation}
\begin{equation}\label{eq2-vel-e}
\frac{\partial\,\mathbf{u}_{\epsilon}}{\partial\,t}-
\mathbf{div}\left(|\mathbf{\nabla}\mathbf{u}_{\epsilon}|^{q-2}\mathbf{\nabla}\mathbf{u}_{\epsilon}-\mathbf{u}_{\epsilon}\otimes\mathbf{u}_{\epsilon}\Phi_{\epsilon}(|\mathbf{u}_{\epsilon}|)\right)
+\alpha|\mathbf{u}_{\epsilon}|^{\sigma-2}\mathbf{u}_{\epsilon}
=\mathbf{f}-\mathbf{\nabla}p_{\epsilon},
\quad\mbox{in}\quad Q_T,
\end{equation}
\begin{equation}\label{eq1-ic-u-e}
\mathbf{u}_{\epsilon}=\mathbf{u}_0\qquad\mbox{in}\quad\Omega \quad\mbox{for}\quad t=0,
\end{equation}
\begin{equation}\label{eq1-bc-u-e}
\mathbf{u}_{\epsilon}=\mathbf{0}\qquad\mbox{on}\quad \Gamma_T.
\end{equation}
A vector function $\mathbf{u}_{\epsilon}\in \mathrm{L}^{\infty}(0,T;\mathbf{H})\cap\mathrm{L}^{q}(0,T;\mathbf{V}_{q})\cap \mathbf{L}^{\sigma}(Q_T)$ is a weak solution to the problem (\ref{eq2-inc-e})-(\ref{eq1-bc-u-e}), if
\begin{equation}\label{eq-ws-reg}
\begin{split}
& -\int_{Q_T}\mathbf{u}_{\epsilon}\cdot\varphi_t\,d\mathbf{x}dt+
\int_{Q_T}\left(|\mathbf{\nabla}\mathbf{u}_{\epsilon}|^{q-2}\mathbf{\nabla}\mathbf{u}_{\epsilon}-\mathbf{u}_{\epsilon}\otimes\mathbf{u}_{\epsilon}\Phi_{\epsilon}(|\mathbf{u}_{\epsilon}|)\right):\mathbf{\nabla\varphi}\,d\mathbf{x}dt
 \\
& +\alpha\int_{Q_T}|\mathbf{u}_{\epsilon}|^{\sigma-2}\mathbf{u}_{\epsilon}\cdot\varphi\,d\mathbf{x}dt
=\int_{Q_T}\mathbf{F}:\mathbf{\nabla\varphi}\,d\,\mathbf{x}dt+\int_{\Omega}\mathbf{u}_0\cdot\varphi(0)\,d\mathbf{x}
\end{split}
\end{equation}
for all $\varphi\in\mathbf{C}^{\infty}(Q_T)$, with $\mathrm{div\,}\varphi=0$ and $\mathrm{supp\,}\varphi\subset\subset\Omega\times[0,T)$. %

\begin{proposition}\label{th-uxu-reg}
Let the assumptions of Theorem~\ref{th-exst-wsqv} be fulfilled. %
Then, for each $\epsilon>0$, there exists a weak solution $\mathbf{u}_{\epsilon}\in\mathrm{L}^{q}(0,T;\mathbf{V}_{q})\cap\mathrm{C}_{\mathrm{w}}([0,T];\mathbf{H})\cap\mathbf{L}^{\sigma}(Q_T)$ to the problem (\ref{eq2-inc-e})-(\ref{eq1-bc-u-e}). In addition, every weak solution satisfies to the following energy equality:
\begin{equation}\label{e-equality-qr}
\frac{1}{2}\|\mathbf{u}_{\epsilon}(t)\|_{\mathbf{H}}^2+
\int_{Q_t}|\mathbf{\nabla}\mathbf{u}_{\epsilon}|^{q}d\mathbf{x}dt+
\alpha\int_{Q_t}|\mathbf{u}_{\epsilon}|^{\sigma}d\mathbf{x}dt =\frac{1}{2}\|\mathbf{u}_0\|_{\mathbf{H}}^2+
\int_{Q_t}\mathbf{F}:\nabla\mathbf{u}_{\epsilon}d\mathbf{x}dt
\end{equation}
for all $t\in(0,T)$.
\end{proposition}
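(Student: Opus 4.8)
\medskip

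The plan is to construct the weak solution $\mathbf{u}_\epsilon$ by a Galerkin approximation, pass to the limit using monotonicity of the $q$-Laplacian and of the damping term, and then derive the energy equality. The crucial simplification afforded by the truncation $\Phi_\epsilon$ is that the modified convective term $\mathbf{u}_\epsilon\otimes\mathbf{u}_\epsilon\,\Phi_\epsilon(|\mathbf{u}_\epsilon|)$ is \emph{bounded} pointwise by $C(\epsilon)|\mathbf{u}_\epsilon|\le C(\epsilon)(1+|\mathbf{u}_\epsilon|^2)$, in fact by the support condition it is bounded by $C/\epsilon^2$ wherever it is nonzero and more precisely $|\mathbf{u}_\epsilon|^2\Phi_\epsilon(|\mathbf{u}_\epsilon|)\le 4/\epsilon^2$; hence it lies in $\mathbf{L}^\infty(Q_T)$ uniformly once we have an $\mathbf{L}^2$ bound, and it defines a continuous (not merely measurable) Nemytskii map on $\mathbf{H}$. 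First I would fix a basis $\{\mathbf{w}_k\}\subset\mathcal{V}$ which is orthonormal in $\mathbf{H}$ and whose span is dense in $\mathbf{V}_q\cap\mathbf{L}^\sigma(\Omega)$ (eigenfunctions of a suitable problem, or simply smooth divergence-free functions), seek $\mathbf{u}_\epsilon^n(t)=\sum_{k=1}^n c_k^n(t)\mathbf{w}_k$ solving the finite-dimensional ODE system obtained by testing (\ref{eq2-vel-e}) against $\mathbf{w}_1,\dots,\mathbf{w}_n$, and invoke Carathéodory/Peano existence for the resulting system; the right-hand side is continuous in $c^n$ because each nonlinearity $(|\mathbf{\nabla}\cdot|^{q-2}\mathbf{\nabla}\cdot$, $|\cdot|^{\sigma-2}\cdot$, $\cdot\otimes\cdot\,\Phi_\epsilon(|\cdot|))$ is continuous on finite-dimensional subspaces. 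Global existence on $[0,T]$ follows once the a priori estimate below is in force.

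\medskip

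The a priori estimate comes from testing the Galerkin system with $\mathbf{u}_\epsilon^n$ itself. The convective term drops out: since $\mathrm{div}\,\mathbf{u}_\epsilon^n=0$ and $\mathbf{u}_\epsilon^n$ vanishes on $\partial\Omega$, one checks that $\int_\Omega(\mathbf{u}_\epsilon^n\otimes\mathbf{u}_\epsilon^n\,\Phi_\epsilon(|\mathbf{u}_\epsilon^n|)):\mathbf{\nabla}\mathbf{u}_\epsilon^n\,d\mathbf{x}=0$, using that $\Phi_\epsilon(|\mathbf{u}|)\mathbf{u}\otimes\mathbf{u}:\mathbf{\nabla}\mathbf{u}=\mathbf{u}\cdot\mathbf{\nabla}\big(\Psi_\epsilon(|\mathbf{u}|^2)\big)$ for an appropriate scalar primitive $\Psi_\epsilon$ (so it is a perfect divergence and integrates to zero). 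This yields, for every $t$,
\begin{equation*}
\frac{1}{2}\|\mathbf{u}_\epsilon^n(t)\|_{\mathbf{H}}^2+\int_{Q_t}|\mathbf{\nabla}\mathbf{u}_\epsilon^n|^q\,d\mathbf{x}dt+\alpha\int_{Q_t}|\mathbf{u}_\epsilon^n|^\sigma\,d\mathbf{x}dt=\frac{1}{2}\|\mathbf{u}_0^n\|_{\mathbf{H}}^2+\int_{Q_t}\mathbf{F}:\mathbf{\nabla}\mathbf{u}_\epsilon^n\,d\mathbf{x}dt,
\end{equation*}
where $\mathbf{u}_0^n$ is the $\mathbf{H}$-projection of $\mathbf{u}_0$. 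Young's inequality absorbs $\int\mathbf{F}:\mathbf{\nabla}\mathbf{u}_\epsilon^n$ into $\tfrac12\int|\mathbf{\nabla}\mathbf{u}_\epsilon^n|^q+C\|\mathbf{F}\|_{\mathbf{L}^{q'}(Q_T)}^{q'}$, giving uniform bounds for $\mathbf{u}_\epsilon^n$ in $\mathbf{L}^\infty(0,T;\mathbf{H})\cap\mathbf{L}^q(0,T;\mathbf{V}_q)\cap\mathbf{L}^\sigma(Q_T)$. From the equation one then bounds $\partial_t\mathbf{u}_\epsilon^n$ in $\mathbf{L}^{s}(0,T;(\mathbf{V}_q\cap\mathbf{L}^\sigma)')$ for a suitable $s>1$ (here using the $\mathbf{L}^\infty$ bound on the truncated convective term, the $\mathbf{L}^{q'}$ bound on $|\mathbf{\nabla}\mathbf{u}_\epsilon^n|^{q-2}\mathbf{\nabla}\mathbf{u}_\epsilon^n$, and the $\mathbf{L}^{\sigma'}$ bound on $|\mathbf{u}_\epsilon^n|^{\sigma-2}\mathbf{u}_\epsilon^n$). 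The Aubin–Lions lemma with $\mathbf{V}_q\hookrightarrow\hookrightarrow\mathbf{H}\hookrightarrow(\mathbf{V}_q\cap\mathbf{L}^\sigma)'$ then provides a subsequence with $\mathbf{u}_\epsilon^n\to\mathbf{u}_\epsilon$ strongly in $\mathbf{L}^q(0,T;\mathbf{H})$ and a.e.\ in $Q_T$, together with the weak limits $|\mathbf{\nabla}\mathbf{u}_\epsilon^n|^{q-2}\mathbf{\nabla}\mathbf{u}_\epsilon^n\rightharpoonup\bm{\chi}$ in $\mathbf{L}^{q'}(Q_T)$ and $|\mathbf{u}_\epsilon^n|^{\sigma-2}\mathbf{u}_\epsilon^n\rightharpoonup\bm{\psi}$ in $\mathbf{L}^{\sigma'}(Q_T)$.

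\medskip

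Passing to the limit in the Galerkin identity, all terms converge directly except the two nonlinear ones: the a.e.\ convergence plus continuity of $\Phi_\epsilon$ and the uniform $\mathbf{L}^\infty$ bound give $\mathbf{u}_\epsilon^n\otimes\mathbf{u}_\epsilon^n\,\Phi_\epsilon(|\mathbf{u}_\epsilon^n|)\to\mathbf{u}_\epsilon\otimes\mathbf{u}_\epsilon\,\Phi_\epsilon(|\mathbf{u}_\epsilon|)$ strongly in every $\mathbf{L}^r(Q_T)$, $r<\infty$ (Vitali), while the damping term is handled by Minty's trick or directly by a.e.\ convergence and equi-integrability. It remains to identify $\bm{\chi}=|\mathbf{\nabla}\mathbf{u}_\epsilon|^{q-2}\mathbf{\nabla}\mathbf{u}_\epsilon$ and $\bm{\psi}=|\mathbf{u}_\epsilon|^{\sigma-2}\mathbf{u}_\epsilon$; this is the standard monotonicity (Minty–Browder) argument, using the limsup of the energy identity to show $\limsup_n\int_{Q_T}\!\big(|\mathbf{\nabla}\mathbf{u}_\epsilon^n|^{q-2}\mathbf{\nabla}\mathbf{u}_\epsilon^n:\mathbf{\nabla}\mathbf{u}_\epsilon^n+\alpha|\mathbf{u}_\epsilon^n|^{\sigma-2}\mathbf{u}_\epsilon^n\cdot\mathbf{u}_\epsilon^n\big)\le\int_{Q_T}\!\big(\bm{\chi}:\mathbf{\nabla}\mathbf{u}_\epsilon+\alpha\bm{\psi}\cdot\mathbf{u}_\epsilon\big)$ and then the monotonicity inequality (\ref{gin-xi-eta}) (with $\delta=0$, $s=q$ and $s=\sigma$) to conclude. \emph{The main obstacle here is this identification step combined with the energy equality:} to get the limsup inequality one needs that $\mathbf{u}_\epsilon$ is an admissible test function in its own weak formulation, which requires a regularization-in-time (Steklov averaging) argument to justify $\int_{Q_t}\partial_t\mathbf{u}_\epsilon\cdot\mathbf{u}_\epsilon=\tfrac12\|\mathbf{u}_\epsilon(t)\|_{\mathbf{H}}^2-\tfrac12\|\mathbf{u}_0\|_{\mathbf{H}}^2$; the vanishing of the limiting convective term's contribution, $\int_{Q_t}(\mathbf{u}_\epsilon\otimes\mathbf{u}_\epsilon\,\Phi_\epsilon(|\mathbf{u}_\epsilon|)):\mathbf{\nabla}\mathbf{u}_\epsilon=0$, must be re-verified in the limit (again via the primitive $\Psi_\epsilon$), and the weak continuity $\mathbf{u}_\epsilon\in\mathrm{C}_{\mathrm{w}}([0,T];\mathbf{H})$ follows from $\mathbf{u}_\epsilon\in\mathbf{L}^\infty(0,T;\mathbf{H})$ together with $\partial_t\mathbf{u}_\epsilon\in\mathbf{L}^s(0,T;(\mathbf{V}_q\cap\mathbf{L}^\sigma)')$ and density of $\mathcal{V}$. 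Finally, since every weak solution (not only the constructed one) satisfies the equation, the same Steklov-averaging/testing-by-$\mathbf{u}_\epsilon$ argument yields the energy \emph{equality} (\ref{e-equality-qr}) for all $t\in(0,T)$, because for fixed $\epsilon$ the truncated convective term is subcritical enough that no energy is lost.
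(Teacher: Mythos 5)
Your proposal is correct, but it takes a genuinely different route from the paper. The paper adapts Wolf's argument (\cite[Theorem 3.1]{Wolf-2007}): it first freezes the convective term by replacing $\mathbf{u}_\epsilon\otimes\mathbf{u}_\epsilon\Phi_\epsilon(|\mathbf{u}_\epsilon|)$ with $\mathbf{\varpi}\otimes\mathbf{\varpi}\Phi_\epsilon(|\mathbf{\varpi}|)$ for a given $\mathbf{\varpi}$ in the unit ball $M_{T_\ast}$ of $\mathrm{L}^2(0,T_\ast;\mathbf{H})$, solves that auxiliary problem uniquely by monotone-operator theory (the diffusion and damping terms are monotone, the convection is now a fixed forcing), and then runs a Schauder fixed-point argument for the resulting map $\mathbf{K}:M_{T_\ast}\to M_{T_\ast}$: self-mapping via the a priori estimate and a short-time choice of $T_\ast$, compactness via Aubin--Lions, continuity via a Minty-type passage to the limit, and finally a continuation argument to the whole cylinder $Q_T$ using the fact that the truncated convective term drops out of the energy balance. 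You instead run a direct Galerkin scheme: solve the finite-dimensional ODE system by Peano, obtain the same a priori bounds (crucially using the same cancellation of the truncated convection, which you justify correctly via the primitive $\Psi_\epsilon(s)=\int_0^s r\Phi_\epsilon(r)\,dr$), and pass to the limit with Aubin--Lions plus Minty--Browder to identify both nonlinear limits. The two approaches buy different things: the fixed-point route cleanly decouples the difficulties, so the inner problem has no convective term at all and monotone-operator theory applies verbatim, at the cost of a two-level construction and a short-time/continuation step; the Galerkin route is more elementary (no Schauder, no continuation) but must handle all three nonlinearities simultaneously in the limit, which is harmless here precisely because the $\Phi_\epsilon$-truncation renders the convective term $\mathbf{L}^\infty$ and the damping is monotone. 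Your identification of the key structural facts (the $\mathbf{L}^\infty$ bound $|\mathbf{u}|^2\Phi_\epsilon(|\mathbf{u}|)\leq 4/\epsilon^2$, the cancellation of the convective term in the energy balance, the need for a Lions--Magenes/Steklov argument to justify testing with $\mathbf{u}_\epsilon$ itself in the energy equality, and the derivation of weak $\mathbf{H}$-continuity from $\mathbf{u}_\epsilon\in\mathrm{L}^\infty(0,T;\mathbf{H})$ and the time-derivative bound) matches the content of the paper's proof, just reorganized around a different existence mechanism.
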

\begin{proof} The proof of Proposition~\ref{th-uxu-reg} is adapted from the proof of
\cite[Theorem 3.1]{Wolf-2007}. %
The difference here is the presence of an extra term which results from the damping and the aspect of the diffusion term.
We shall split this proof into three steps.

\vspace{0.2cm}
\noindent
\emph{First Step.}
Let $T_{\ast}\in(0,T]$ be arbitrarily chosen and let us set
$$M_{T_{\ast}}:=\{\mathbf{\varpi}\in\rm{L}^2(0,T_{\ast};\mathbf{H}):\|\mathbf{\varpi}\|_{\rm{L}^2(0,T_{\ast};\mathbf{H})}\leq 1\}.$$
Observing that by the property (\ref{gin-xi-eta}), the diffusion term is monotonous as well the damping term, we can use the theory of monotone operators (\emph{cf.} \cite[Section 2.1]{Lions-1969} together with \cite[Section 9]{B-1989}) to prove that for each $\mathbf{\varpi}\in M_{T_{\ast}}$, there exists a weak solution $\mathbf{\upsilon}\in\mathrm{L}^{\infty}(0,T_{\ast};\mathbf{H})\cap\mathrm{L}^{q}(0,T_{\ast};\mathbf{V}_{q})\cap\mathbf{L}^{\sigma}(Q_{T_{\ast}})$ to the following system:
\begin{equation}\label{eq2-inc-ve}
\mathrm{div}\,\mathbf{\upsilon}=0\quad\mbox{in}\quad Q_{T_{\ast}},
\end{equation}
\begin{equation}\label{eq2-vel-ve}
\frac{\partial\,\mathbf{\upsilon}}{\partial\,t}-
\mathbf{div}(|\mathbf{\nabla\upsilon}|^{q-2}\mathbf{\nabla\upsilon})
+\alpha|\mathbf{\upsilon}|^{\sigma-2}\mathbf{\upsilon}
=\mathbf{f}-\mathbf{\nabla}p-\mathbf{div}(\mathbf{\varpi}\otimes\mathbf{\varpi}\Phi_{\epsilon}(|\mathbf{\varpi}|))
\quad\mbox{in}\quad Q_{T_{\ast}},
\end{equation}
\begin{equation}\label{eq1-ic-u-ve}
\mathbf{\upsilon}=\mathbf{u}_0\qquad\mbox{in}\quad\Omega \quad\mbox{for}\quad t=0,
\end{equation}
\begin{equation}\label{eq1-bc-u-ve}
\mathbf{\upsilon}=\mathbf{0}\qquad\mbox{on}\quad \Gamma_{T_{\ast}}.
\end{equation}
Moreover, once the diffusion and damping terms satisfy the monotonicity property (\ref{gin-xi-eta}), the weak solution of (\ref{eq2-inc-ve})-(\ref{eq1-bc-u-ve}) is unique. %

\vspace{0.2cm}
\noindent
\emph{Second Step.}
As a consequence of the previous step, we can define a mapping
\begin{equation}
\mathbf{K}:M_{T_{\ast}}\to\rm{L}^2(0,T_{\ast};\mathbf{H})
\end{equation}
such that to each $\mathbf{\varpi}\in M_{T_{\ast}}$ associates the unique weak solution
$\mathbf{\upsilon}\in\mathrm{L}^{\infty}(0,T_{\ast};\mathbf{H})\cap\mathrm{L}^{q}(0,T_{\ast};\mathbf{V}_{q})\cap\mathbf{L}^{\sigma}(Q_{T_{\ast}})$. %
Testing formally (\ref{eq2-vel-ve}) by the unique weak solution $\mathbf{\upsilon}:=\mathbf{K}(\mathbf{\varpi})$, with $\mathbf{\varpi}\in M_{T_{\ast}}$, integrating over $Q_t$, with $0<t<T_{\ast}$, using Young's inequality and, at last, the definition of $\Phi_{\epsilon}(|\mathbf{\varpi}|)$, we achieve to
\begin{equation}\label{est1-formaly}
\begin{split}
& \|\mathbf{\upsilon}\|_{\rm{L}^{\infty}(0,T_{\ast};\mathbf{H})}^2+
C_1\int_{Q_{T_{\ast}}}|\mathbf{\nabla}\mathbf{\upsilon}|^{q}d\mathbf{x}dt+
C_2\int_{Q_{T_{\ast}}}|\mathbf{\upsilon}|^{\sigma}d\mathbf{x}dt\leq \\
&  \gamma_1+\gamma_2\|\mathbf{\varpi}\|_{\rm{L}^{2}(0,T_{\ast};\mathbf{H})}^2,\qquad
\gamma_1:=\|\mathbf{u}_{0}\|_{\mathbf{H}}^2+C_3\int_{Q_{T_{\ast}}}|\mathbf{F}|^{q'}d\mathbf{x}ds,\quad
    \gamma_2:=C_4.
\end{split}
\end{equation}
Then setting $T_{\ast}:=\min\left\{1/(\gamma_1+\gamma_2),T\right\}$, we can prove, from (\ref{est1-formaly}) and due to the fact that $\mathbf{\varpi}\in M_{T_{\ast}}$, that
\begin{equation}\label{est-Kw-L2}
\|\mathbf{K}(\mathbf{\varpi})\|_{\rm{L}^{2}(0,T_{\ast};\mathbf{H})}^2\leq T_{\ast}(\gamma_1+\gamma_2)\leq 1
\end{equation}
for all $\mathbf{\varpi}\in M_{T_{\ast}}$. This proves that $\mathbf{K}$ maps $M_{T_{\ast}}$ into itself.

On the other hand, in order to prove the compactness of $\mathbf{K}$, we obtain from (\ref{est1-formaly}) that
\begin{equation}\label{est-Kw-Lg}
\|\mathbf{K}(\mathbf{\varpi})\|_{\rm{L}^{q}(0,T_{\ast};\mathbf{V}_{q})}\leq
C_1^{-1}(\gamma_1+\gamma_2)
\end{equation}
for all $\mathbf{\varpi}\in M_{T_{\ast}}$. %
Owing to the  assumptions (\ref{eq1-vel-u0}) and (\ref{h-f}), the right hand side of (\ref{est-Kw-Lg}) is finite.
Then, for the distributive time derivative $\mathbf{\upsilon}':=(\mathbf{K}(\mathbf{\varpi}))'$, with $\mathbf{\varpi}\in M_{T_{\ast}}$, we can prove that
\begin{equation*}
\|\mathbf{div}\left(|\mathbf{\nabla\upsilon}|^{q-2}\mathbf{\nabla\upsilon}
-\mathbf{\varpi}\otimes\mathbf{\varpi}\Phi_{\epsilon}(|\mathbf{\varpi}|)-\mathbf{F}\right)
\|_{\rm{L}^{q'}(0,T_{\ast};\mathbf{V}_{q}')}+
\||\mathbf{\upsilon}|^{\sigma-2}\mathbf{\upsilon}\|_{\mathbf{L}^{\sigma'}(Q_{T_{\ast}})}<\infty
\end{equation*}
and consequently
\begin{equation}\label{est1-v'-reg}
\mathbf{\upsilon}'\in\rm{L}^{q'}(0,T_{\ast};\mathbf{V}_{q}')+\mathbf{L}^{\sigma'}(Q_{T_{\ast}}).
\end{equation}
In fact, by virtue of (\ref{est1-formaly}), it follows the uniform boundedness of $|\mathbf{\nabla\upsilon}|^{q-2}\mathbf{\nabla\upsilon}$ in $\mathbf{L}^{q'}(Q_{T_{\ast}})$ and of $|\mathbf{\upsilon}|^{\sigma-2}\mathbf{\upsilon}$ in $\mathbf{L}^{\sigma'}(Q_{T_{\ast}})$. %
By assumption (\ref{h-f}), $\mathbf{F}\in\mathbf{L}^{q'}(Q_{T_{\ast}})$. %
On the other hand, using the definition of $\Phi_{\epsilon}$, we can prove that $\|\mathbf{\varpi}\otimes\mathbf{\varpi}\Phi_{\epsilon}(|\mathbf{\varpi}|)\|_{\mathbf{L}^{q'}(Q_{T_{\ast}})}^{q'}\leq C\|\mathbf{\varpi}\|_{\mathbf{L}^{2}(0,T_{\ast};\mathbf{H})}^2$. %
By (\ref{est-Kw-Lg}) and (\ref{est1-v'-reg}), and once that $\mathbf{V}_q\subset\subset\mathbf{H}\subset \mathbf{V}_q'$ for $q>\frac{2N}{N+2}$, we can apply Aubin-Lions compactness lemma (cf.~\cite{Simon-1987}) to prove that $\mathbf{K}(M_{T_{\ast}})$ is relatively compact in $\mathrm{L}^{q}(0,T_{\ast};\mathbf{H})$. %
Then, since $\mathbf{\upsilon}\in\rm{L}^{\infty}(0,T_{\ast};\mathbf{H})$, by parabolic interpolation it follows that
$\mathbf{K}(M_{T_{\ast}})$ is relatively compact in $\rm{L}^{2}(0,T_{\ast};\mathbf{H})$.

To prove the continuity of $\mathbf{K}$, we consider a sequence $\mathbf{\varpi}_m$ in $M_{T_{\ast}}$ such that
$$\mathbf{\varpi}_m\to \mathbf{\varpi}\quad\mbox{in}\quad \rm{L}^{2}(0,T_{\ast};\mathbf{H})\quad\mbox{as}\quad m\to\infty.$$
By the relative compactness of $\mathbf{K}(M_{T_{\ast}})$ in $\rm{L}^{2}(0,T_{\ast};\mathbf{H})$, there exists a subsequence $\mathbf{\varpi}_{m_k}$ such that
\begin{equation}\label{cont-subs-reg}
\mathbf{K}(\mathbf{\varpi}_{m_k})\to \mathbf{\upsilon}\quad\mbox{in}\quad \rm{L}^{2}(0,T_{\ast};\mathbf{H}),\quad\mbox{as}\quad k\to\infty.
\end{equation}
From the definition of $\mathbf{K}$, the functions $\mathbf{\upsilon}_{m_k}:=\mathbf{K}(\mathbf{\varpi}_{m_k})$ satisfy to
\begin{equation}\label{eq-ws-seq-reg}
\begin{split}
& -\int_{Q_{T_{\ast}}}\mathbf{\upsilon}_{m_k}\cdot\varphi_t\,d\mathbf{x}dt+
\int_{Q_{T_{\ast}}}|\mathbf{\nabla}\mathbf{\upsilon}_{m_k}|^{q-2}\mathbf{\nabla}\mathbf{\upsilon}_{m_k}:\mathbf{\nabla\varphi}\,d\mathbf{x}dt\\
&+
\alpha\int_{Q_{T_{\ast}}}|\mathbf{\upsilon}_{m_k}|^{\sigma-2}\mathbf{\upsilon}_{m_k}\cdot\mathbf{\varphi}\,d\mathbf{x}dt\\
& =\int_{Q_{T_{\ast}}}\left(\mathbf{F}+\mathbf{\varpi}_{m_k}\otimes\mathbf{\varpi}_{m_k}\Phi_{\epsilon}(|\mathbf{\varpi}_{m_k}|)\right):\mathbf{\nabla\varphi}\,d\,\mathbf{x}dt
+\int_{\Omega}\mathbf{u}_0\cdot\varphi(0)\,d\mathbf{x}
\end{split}
\end{equation}
for all $\varphi\in\mathbf{C}^{\infty}(Q_{T_{\ast}})$, with $\mathrm{div\,}\varphi=0$ and $\mathrm{supp\,}\varphi\subset\subset\Omega\times[0,T_{\ast})$. %
Passing to the limit in (\ref{eq-ws-seq-reg}) by using the appropriated convergence results (see \cite[p. 119]{Wolf-2007} and \cite[p. 236]{B-1989}) and the usual Minty trick (see \emph{e.g.}~\cite[pp. 212-214]{Lions-1969}), we can prove that $\mathbf{\upsilon}=\mathbf{K}(\mathbf{\varpi})$. %
The only difference here is that $|\mathbf{\upsilon}_{m_k}|^{\sigma-2}\mathbf{\upsilon}_{m_k}\to \widetilde{\mathbf{\upsilon}}$ weakly in $\mathbf{L}^{\sigma'}(Q_{T_{\ast}})$, as $k\to\infty$. %
Since $\mathbf{\upsilon}_{m_k}\to\mathbf{\upsilon}$ weakly in $\mathbf{L}^{\sigma}(Q_{T_{\ast}})$, as $k\to\infty$, there exists a subsequence, still denoted by $\mathbf{\upsilon}_{m_k}$, such that $\mathbf{\upsilon}_{m_k}\to\mathbf{\upsilon}$ a.e. in $Q_{T_{\ast}}$.
In addition, because $|\mathbf{\upsilon}_{m_k}|^{\sigma-2}\mathbf{\upsilon}_{m_k}$ is uniformly bounded in $\mathbf{L}^{\sigma'}(Q_{T_{\ast}})$, we can apply Lesbesgue's theorem of dominated convergence to prove that
\begin{equation}\label{str-conv-yup-sig'}
 |\mathbf{\upsilon}_{m_k}|^{\sigma-2}\mathbf{\upsilon}_{m_k}\to|\mathbf{\upsilon}|^{\sigma-2}\mathbf{\upsilon}\quad
 \mbox{strongly in}\ \mathbf{L}^{\sigma'}(Q_{T_{\ast}})
\end{equation}
and, as a consequence, $\widetilde{\mathbf{\upsilon}}=|\mathbf{\upsilon}|^{\sigma-2}\mathbf{\upsilon}$.
From (\ref{cont-subs-reg}), we conclude that
$\mathbf{\mathbf{K}}(\mathbf{\varpi}_m)\to \mathbf{K}(\mathbf{\varpi})$ in $\rm{L}^{2}(0,T_{\ast};\mathbf{H})$ as $m\to\infty$, which proves the continuity of $\mathbf{K}$.

Now, applying Schauder's fixed point theorem, there exists a function $\mathbf{\upsilon}_{T_{\ast}}\in M_{T_{\ast}}$  such that $\mathbf{K}(\mathbf{\upsilon}_{T_{\ast}})=\mathbf{\upsilon}_{T_{\ast}}$ and which is a weak solution to the problem (\ref{eq2-inc-e})-(\ref{eq1-bc-u-e}) in the cylinder $Q_{T_{\ast}}$.

\vspace{0.2cm}
\noindent
\emph{Third Step.}
Testing (\ref{eq2-vel-e}) by the weak solution $\mathbf{\upsilon}_{T_{\ast}}$, integrating over $Q_{T_{\ast}}$, proceeding we did as for (\ref{est1-formaly}) and observing that due to the definition of $\Phi_{\varepsilon}$ the term resulting from convection is zero, we obtain
\begin{equation}\label{est2-formaly}
\|\mathbf{\upsilon}_{T_{\ast}}(T_{\ast})\|_{\mathbf{H}}^2\leq
C\left(\|\mathbf{u}_{0}\|_{\mathbf{H}}^2+
\|\mathbf{F}\|_{\mathbf{L}^{q'}(Q_{T_{\ast}})}^{q'}\right).
\end{equation}
The estimate (\ref{est2-formaly}) is independent of $T_{\ast}$ and therefore we can extend $\mathbf{\upsilon}_{T_{\ast}}$ as a weak solution to the problem (\ref{eq2-inc-e})-(\ref{eq1-bc-u-e}) in the whole cylinder $Q_T$.

Finally, the energy relation (\ref{e-equality-qr}) follows by testing (\ref{eq2-vel-e}) by a weak solution and integrating over $Q_t$ with $0<t<T_{\ast}$.
\end{proof}

\section{Existence of approximate solutions}\label{Sect-EAS}

Let $\mathbf{u}_{\epsilon}\in\rm{L}^{q}(0,T;\mathbf{V}_{q})\cap\mathrm{L}^{\infty}(0,T;\mathbf{H})\cap\mathbf{L}^{\sigma}(Q_T)$ be a weak solution to the problem (\ref{eq2-inc-e})-(\ref{eq1-bc-u-e}). %
From Proposition~\ref{th-uxu-reg} (see (\ref{e-equality-qr})), we can prove that
\begin{equation}\label{e-inequality-qr}
 \|\mathbf{u}_{\epsilon}\|_{\rm{L}^{\infty}(0,T;\mathbf{H})}^2+
\int_{Q_T}|\mathbf{\nabla}\mathbf{u}_{\epsilon}|^{q}d\mathbf{x}dt+
\int_{Q_T}|\mathbf{u}_{\epsilon}|^{\sigma}d\mathbf{x}dt\leq C,
\end{equation}
where, by the assumptions (\ref{eq1-vel-u0}) and (\ref{h-f}), $C$ is a positive constant which does not depend on $\epsilon$. %
From (\ref{e-inequality-qr}) we obtain
\begin{equation}\label{est-inf-gam}
\|\mathbf{u}_{\epsilon}\|_{L^{\infty}(0,T;\mathbf{H})}^2+\|\mathbf{u}_{\epsilon}\|_{\rm{L}^{q}(0,T;\mathbf{V}_{q})}^{q}\leq C,
\end{equation}
\begin{equation}\label{est-sig}
\|\mathbf{u}_{\epsilon}\|_{\mathbf{L}^{\sigma}(Q_T)}\leq C.
\end{equation}
Using (\ref{est-inf-gam}) and (\ref{est-sig}), it follows that
\begin{equation}\label{est-q'}
\||\mathbf{\nabla}\mathbf{u}_{\epsilon}|^{q-2}\mathbf{\nabla}\mathbf{u}_{\epsilon}\|_{\mathbf{L}^{q'}(Q_T)}\leq C,
\end{equation}
\begin{equation}\label{est-sig'}
\||\mathbf{u}_{\epsilon}|^{\sigma-2}\mathbf{u}_{\epsilon}\|_{\mathbf{L}^{\sigma'}(Q_T)}\leq C.
\end{equation}
On the other hand, by using (\ref{est-inf-gam}) and the
Sobolev imbedding $\mathrm{L}^q(0,T;\mathbf{V}_q)\cap\mathrm{L}^{\infty}(0,T;\mathbf{H})\hookrightarrow\mathrm{L}^{q\frac{N+2}{N}}(Q_T)$ (see~\cite[p. 213]{Lions-1969}), we can prove that
\begin{equation}\label{est-gam(N+2)/N}
\|\mathbf{u}_{\epsilon}\|_{\rm{L}^{q\frac{N+2}{N}}(Q_T)}\leq C.
\end{equation}
As a consequence of (\ref{est-gam(N+2)/N}) and of the definition of $\Phi_{\epsilon}$ (see (\ref{Phi-e})),
\begin{equation}\label{est-Phiuxu}
\|\mathbf{u}_{\epsilon}\otimes\mathbf{u}_{\epsilon}\Phi_{\epsilon}(|\mathbf{u}_{\epsilon}|)\|_{\mathbf{L}^{q\frac{N+2}{2N}}(Q_T)}\leq C.
\end{equation}
Note that the constants in (\ref{est-inf-gam})-(\ref{est-Phiuxu}) are distinct and do not depend on $\epsilon$. %
From (\ref{est-inf-gam})-(\ref{est-Phiuxu}), there exists a sequence of positive numbers $\epsilon_m$ such that
$\epsilon_m\to 0$, as $m\to \infty$, and
\begin{equation}\label{convg-La}
\mbox{$\mathbf{u}_{\epsilon_m}\to \mathbf{u}$\quad weakly in $\rm{L}^{q}(0,T;\mathbf{V}_{q})$,\quad as $m\to\infty$,}
\end{equation}
\begin{equation}\label{convg-Lsig}
\mbox{$\mathbf{u}_{\epsilon_m}\to \mathbf{u}$\quad weakly in $\mathbf{L}^{\sigma}(Q_T)$,\quad as $m\to\infty$,}
\end{equation}
\begin{equation}\label{convg-S-b'}
\mbox{$|\mathbf{\nabla}\mathbf{u}_{\epsilon_m}|^{q-2}\mathbf{\nabla}\mathbf{u}_{\epsilon_m}\to \mathbf{S}$\quad weakly in $\mathbf{L}^{q'}(Q_T)$,\quad as $m\to\infty$,}
\end{equation}
\begin{equation}\label{convg-sig-b'}
\mbox{$|\mathbf{u}_{\epsilon_m}|^{\sigma-2}\mathbf{u}_{\epsilon_m}\to \widetilde{\mathbf{u}}$\quad weakly in $\mathbf{L}^{\sigma'}(Q_T)$,\quad as $m\to\infty$,}
\end{equation}
\begin{equation}\label{convg-La(N+2)/N}
\mbox{$\mathbf{u}_{\epsilon_m}\to \mathbf{u}$\quad weakly in $\rm{L}^{q\frac{N+2}{N}}(0,T;\mathbf{V}_{q})$,\quad as $m\to\infty$,}
\end{equation}
\begin{equation}\label{convg-Phiuxu}
\mbox{$\mathbf{u}_{\epsilon_m}\otimes\mathbf{u}_{\epsilon_m}\Phi_{\epsilon_m}(|\mathbf{u}_{\epsilon_m}|)\to \mathbf{G}$\quad weakly in $\mathbf{L}^{q\frac{N+2}{2N}}(Q_T)$,\quad as $m\to\infty$}.
\end{equation}
Here we observe that using (\ref{convg-sig-b'}) and arguing as in the proof of Proposition~\ref{th-uxu-reg} (see (\ref{str-conv-yup-sig'})), we can prove that
\begin{equation}\label{str-conv-u-sig'}
 |\mathbf{u}_{\epsilon_m}|^{\sigma-2}\mathbf{u}_{\epsilon_m}\to|\mathbf{u}|^{\sigma-2}\mathbf{u}\quad
 \mbox{strongly in}\ \mathbf{L}^{\sigma'}(Q_{T}).
\end{equation}
As a consequence, we ca write
$\widetilde{\mathbf{u}}=|\mathbf{u}|^{\sigma-2}\mathbf{u}$. %
Then, using the convergence results (\ref{convg-La})-(\ref{str-conv-u-sig'}), we can pass to the limit $\epsilon_m\to 0$ in (\ref{eq-ws-reg}) with $\mathbf{u}_{\epsilon}$ replaced by $\mathbf{u}_{\epsilon_m}$, to obtain
\begin{equation}\label{limit-eq-SH}
-\int_{Q_T}\mathbf{u}\cdot\varphi_t\,d\mathbf{x}dt+\alpha\int_{Q_T}|\mathbf{u}|^{\sigma-2}\mathbf{u}\cdot\mathbf{\varphi}\,d\mathbf{x}dt+
\int_{Q_T}(\mathbf{S}-\mathbf{G}-\mathbf{F}):\mathbf{\nabla\varphi}\,d\mathbf{x}dt
=\int_{\Omega}\mathbf{u}_0\cdot\varphi(0)\,d\mathbf{x}
\end{equation}
for all $\varphi\in\mathbf{C}^{\infty}(Q_T)$, with $\mathrm{div\,}\varphi=0$ and $\mathrm{supp\,}\varphi\subset\subset\Omega\times[0,T)$. %

\section{Convergence of the approximated convective term}\label{Sect-CACT}

In this section we shall prove that $\mathbf{G}=\mathbf{u}\otimes\mathbf{u}$. %
We start by observing that, from (\ref{eq-ws-reg}), it follows
\begin{equation}\label{eq-ws-reg-em}
\begin{split}
& -\int_{Q_T}\mathbf{u}_{\epsilon_m}\cdot\varphi_t\,d\mathbf{x}dt+
   \alpha\int_{Q_T}|\mathbf{u}_{\epsilon_m}|^{\sigma-2}\mathbf{u}_{\epsilon_m}\cdot\varphi\,d\mathbf{x}dt+\\
& \int_{Q_T}\left(|\mathbf{\nabla}\mathbf{u}_{\epsilon_m}|^{q-2}\mathbf{\nabla}\mathbf{u}_{\epsilon_m}-\mathbf{u}_{\epsilon_m}\otimes\mathbf{u}_{\epsilon_m}\Phi_{\epsilon_m}(|\mathbf{u}_{\epsilon_m}|)-\mathbf{F}\right):\mathbf{\nabla\varphi}\,d\mathbf{x}dt=0
\end{split}
\end{equation}
for all $\varphi\in\mathbf{C}^{\infty}_0(Q_T)$ with $\mathrm{div\,}\varphi=0$. %
Then, from (\ref{h-f}), (\ref{est-q'}) and (\ref{est-Phiuxu}), we have
\begin{equation}\label{THF-Lr}
\mathbf{Q}_{\epsilon_m}:=|\mathbf{\nabla}\mathbf{u}_{\epsilon_m}|^{q-2}\mathbf{\nabla}\mathbf{u}_{\epsilon_m}-\mathbf{u}_{\epsilon_m}\otimes\mathbf{u}_{\epsilon_m}\Phi_{\epsilon_m}(|\mathbf{u}_{\epsilon_m}|)-\mathbf{F}
\in
\mathbf{L}^r(Q_T) %
\end{equation}
for any $r$ satisfying to
\begin{equation}\label{r}
1<r\leq\min\left\{\frac{q(N+2)}{2N},q'\right\}.
\end{equation}
Using (\ref{est-sig'}) and (\ref{THF-Lr})-(\ref{r}), we can obtain, from (\ref{eq-ws-reg-em}), that
the distributive time derivatives
\begin{equation}\label{xi-ve'}
\mathbf{u}_{\epsilon_m}'\in \mathrm{L}^{r}(0,T;\mathbf{W}^{-1,r}(\Omega))+\mathbf{L}^{\sigma'}(Q_T).
\end{equation}
Due to the admissible range for $r$ (see (\ref{r})), there always exists a $\gamma>1$ such that the following compact and Sobolev imbeddings hold
\begin{equation}\label{c-imb}
\mathbf{W}^{1,q}_0(\Omega)\subset\subset\mathbf{L}^{\gamma}(\Omega)\subset\mathbf{W}^{-1,r}(\Omega),\quad
((r')^{\ast})'\leq\gamma<q^{\ast},
\end{equation}
where $q^{\ast}$ is the Sobolev conjugate of $q$ and $r'$ is the Hölder conjugate of $r$. %
Then, using Aubin-Lions compactness lemma (cf. Simon~\cite{Simon-1987}), we obtain from (\ref{convg-La}) together with (\ref{xi-ve'}) and (\ref{c-imb}), and passing to a subsequence, that
\begin{equation}\label{comp-g}
\mathbf{u}_{\epsilon_{m}}\to\mathbf{u}\quad
\mbox{strongly in}\quad \mathrm{L}^{r}(0,T;\mathbf{L}^{\gamma}(\Omega)),\quad\mbox{as}\quad m\to \infty.
\end{equation}
Using parabolic interpolation, we obtain from (\ref{est-inf-gam}) and (\ref{comp-g}) that
\begin{equation}\label{str-conv-g}
\mathbf{u}_{\epsilon_{m}}\to\mathbf{u}\quad
\mbox{strongly in}\quad \mathrm{L}^{s}(0,T;\mathbf{L}^{\gamma}(\Omega))\quad\forall\ s:1\leq s<\infty,\quad\mbox{as}\quad m\to \infty.
\end{equation}
Now, observing that $q>\frac{2N}{N+2}$ is equivalent to $q^{\ast}>q\frac{N+2}{N}$, we can choose $\gamma$ such that $q\frac{N+2}{N}\leq\gamma<q^{\ast}$ such that, in view of (\ref{str-conv-g}),
\begin{equation*}\label{str-conv-cve}
\mathbf{u}_{\epsilon_{m}}\to\mathbf{u}\quad
\mbox{strongly in}\quad \mathbf{L}^{s}(Q_T),\quad s=q\frac{N+2}{N},\quad\mbox{as}\quad m\to \infty.
\end{equation*}
In consequence
\begin{equation}\label{unb-uxu-em}
\mathbf{u}_{\epsilon_m}\otimes\mathbf{u}_{\epsilon_m}\Phi_{\epsilon_m}(|\mathbf{u}_{\epsilon_m}|)\to                                    \mathbf{u}\otimes\mathbf{u}\quad\mbox{strongly in $\mathbf{L}^{q\frac{N+2}{2N}}(Q_T)$},\quad \mbox{as}\ m\to\infty.
\end{equation}
Finally, from (\ref{convg-Phiuxu}) and (\ref{unb-uxu-em}), we conclude that $\mathbf{G}=\mathbf{u}\otimes\mathbf{u}$.

\section{Weak continuity}\label{Sect-WC}

We start this section by proving that
\begin{equation}\label{u-wcont}
\mathbf{u}\in\mathrm{C}_w([0,T];\mathbf{H}).
\end{equation}
We observe that, from (\ref{limit-eq-SH}), the distributive time derivative $\mathbf{u}_t$ is uniquely defined by
\begin{equation}\label{dist-ti-der}
\langle\mathbf{u}_t,\mathbf{\varphi}\rangle=\langle\mathbf{div}(\mathbf{S}-\mathbf{G})-\alpha|\mathbf{u}|^{\sigma-2}\mathbf{u}+\mathbf{F},\mathbf{\varphi}\rangle\quad \forall\ \mathbf{\varphi}\in\mathrm{C}^{\infty}_0(0,T;\mathbf{Y}),
\end{equation}
where
\begin{equation}\label{set-Y}
\mathbf{Y}:=\mathbf{V}_{q}\cap\mathbf{V}_{\kappa}\cap\mathbf{L}^{\sigma}(Q_T)\cap\mathbf{H},\quad
\kappa:=\left(\frac{q(N+2)}{2N}\right)'.
\end{equation}
Then we can prove that
\begin{equation}\label{ut-in-Lr}
\mathbf{u}_t\in\mathrm{L}^{\frac{N+1}{N}}(0,T;\mathbf{Y}').
\end{equation}
In fact, due to (\ref{convg-S-b'}) and (\ref{convg-sig-b'}), immediately follows that $\mathbf{div}\,\mathbf{S}\in\mathrm{L}^{\frac{N+1}{N}}(0,T;\mathbf{V}_{q}')$ and $|\mathbf{u}|^{\sigma-2}\mathbf{u}\in\mathbf{L}^{\sigma'}(Q_T)$, respectively.
By assumption (\ref{h-f}), $\mathbf{F}\in\mathrm{L}^{\frac{N+1}{N}}(0,T;\mathbf{V}_{q}')$. %
That $\mathbf{div}\,\mathbf{G}\in\mathrm{L}^{\frac{N+1}{N}}(0,T;\mathbf{V}_{\kappa}')$ follows by (\ref{convg-Phiuxu}) if $k'=q\frac{N+2}{2N}$, which in fact is true by our choice of $k$ (\emph{cf.}~(\ref{set-Y})). %

Next, let $t_0\in[0,T]$ be fixed and let $t_k$ be a sequence in $[0,T]$ such that
\begin{equation*}
t_k\to t_0,\quad \mbox{as}\ k\to\infty,\quad\mbox{and such that}\quad \mathbf{u}(t_k)\in\mathbf{H}\quad \forall\ k\in\mathds{N}.
\end{equation*}
Then we consider the continuous representant of $\mathbf{u}$ in $\mathrm{C}(0,T;\mathbf{Y}')$, which exists by virtue of (\ref{dist-ti-der}) and (\ref{ut-in-Lr}). %
Finally by means of reflexivity in $\mathbf{H}$ and of the continuous and dense imbedding of $\mathbf{H}$ into $\mathbf{Y}'$, we can prove that
\begin{equation*}
\mathbf{u}(t_k)\to \mathbf{u}(t_0)\quad \mbox{weakly in}\ \mathbf{H},\quad \mbox{as}\ k\to\infty,
\end{equation*}
and whence (\ref{u-wcont}).

Now, let us prove that for every $t\in[0,T]$
\begin{equation}\label{w-cont-em}
\mathbf{u}_{\epsilon_m}(t)\to\mathbf{u}(t)\quad\mbox{weakly in}\ \mathbf{H},\quad \mbox{as}\ m\to\infty.
\end{equation}
Due to (\ref{est-inf-gam}), there exists a subsequence $\mathbf{u}_{\epsilon_{m_k}}(t)\in\mathbf{H}$ such that
\begin{equation*}
\mathbf{u}_{\epsilon_{m_k}}(t)\to\eta\quad\mbox{weakly in}\ \mathbf{H},\quad \mbox{as}\ m\to\infty.
\end{equation*}
Arguing as we did for (\ref{dist-ti-der}) and (\ref{ut-in-Lr}), the distributive time derivative $\mathbf{u}_{\epsilon_{m_k}}'\in\mathrm{L}^{\frac{N+1}{N}}(0,T;\mathbf{Y}')$ and is uniquely defined by
\begin{equation}\label{dist-t-der-em}
\begin{split}
  & \langle\mathbf{u}_{\epsilon_{m_k}}',\mathbf{\varphi}\rangle= \\
  & \langle\mathbf{div}(|\mathbf{\nabla}\mathbf{u}_{\epsilon_{m_k}}|^{q-2}\mathbf{\nabla}\mathbf{u}_{\epsilon_{m_k}}-
\mathbf{u}_{\epsilon_{m_k}}\otimes\mathbf{u}_{\epsilon_{m_k}}\Phi_{\epsilon_{m_k}}(|\mathbf{u}_{\epsilon_{m_k}}|))-
\alpha|\mathbf{u}_{\epsilon_{m_k}}|^{\sigma-2}\mathbf{u}_{\epsilon_{m_k}}+\mathbf{F},\mathbf{\varphi}\rangle
\end{split}
\end{equation}
for all $\mathbf{\varphi}\in\mathrm{C}^{\infty}_0(0,T;\mathbf{Y})$. %
In particular, there holds $\mathbf{u}_{\epsilon_{m_k}}\in\mathrm{C}(0,T;\mathbf{Y}')$. %
Next, we introduce $\eta$ in (\ref{dist-t-der-em}), we use integration by parts and we carry out the passage to the limit in the resulting equation
by using the convergence results (\ref{convg-La})-(\ref{convg-Phiuxu}).
Combining this equation with the one which results from inserting $\eta$ into (\ref{dist-ti-der}) and integrating by parts, we obtain
$\mathbf{u}(t)=\eta$, which yields (\ref{w-cont-em}). %
Finally, combining (\ref{u-wcont}) and (\ref{w-cont-em}), we see that also $\mathbf{u}_{\epsilon_{m}}$ satisfies to (\ref{u-wcont}).

\section{Auxiliary results for decomposing the pressure}\label{Sect-ARDP}

Here we make a break in the proof of Theorem~\ref{th-exst-wsqv} to show that the results of Wolf~\cite{DRW-2010} concerned with the local decomposition of the pressure still hold in the case of the momentum equation modified by the presence of the damping term. %
For, let $\omega'$ be a fixed but arbitrary open bounded subset of $\Omega$ such that
\begin{equation}\label{om-l}
\omega'\subset\subset\Omega\quad\mbox{and}\quad \partial\omega'\in\mathrm{C}^2. %
\end{equation}
Given $s$ such that $1<s<\infty$, lets us consider the following auxiliary function spaces related with the Helmholtz-Weyl decomposition (\emph{cf.} \cite[Section 2]{Wolf-2007}, see also \cite[Section III.1]{Galdi-1994} and \cite{SS-1992}):
\begin{eqnarray}
% \nonumber to remove numbering (before each equation)
  & &\label{set-A}
\mathrm{A}^s(\omega'):=\{a\in\mathrm{L}^s(\omega'):a=\triangle f,\ f\in\mathrm{W}^{2,s}_0(\omega')\}; \\
  & &
\mathrm{\dot{B}}^s(\omega'):=\left\{b\in\mathrm{B}^s(\omega'):\int_{\omega'} b\,d\mathbf{x}=0\right\},\quad
\mathrm{B}^s(\omega'):=\left\{b\in\mathrm{L}^{s}(\omega'):\triangle b=0\right\}.\label{set-Bpt}
\end{eqnarray}

\begin{proposition}\label{prop1-dec-p}
Let $\mathbf{Q}\in\mathbf{L}^{s_1}(\omega'_T)$, $\mathbf{q}\in\mathbf{L}^{s_2}(\omega'_T)$, with $1<s_1,\ s_2<\infty$,
and
\begin{equation}\label{u-wc-P1}
\mathbf{u}\in\mathrm{C}_{\rm{w}}([0,T];\mathbf{H})
\end{equation}
where here $\mathbf{H}$ is defined over $\omega'$. %
Suppose that
\begin{equation}\label{eq1-pro-dp}
-\int_{\omega'_T}\mathbf{u}\cdot\varphi_t\,d\mathbf{x}dt+
 \int_{\omega'_T}\mathbf{Q}:\mathbf{\nabla\varphi}\,d\mathbf{x}dt+
 \int_{\omega'_T}\mathbf{q}\cdot\varphi\,d\mathbf{x}dt=0
\end{equation}
for all $\varphi\in\mathbf{C}^{\infty}_0(\omega'_T)$ with $\mathrm{div\,}\varphi=0$ and where $\omega'_T=\omega'\times(0,T)$ and $\omega'$ satisfies to (\ref{om-l}). %
Then there exist unique functions
\begin{equation}\label{eq2-pro-dp}
p^0\in\mathrm{L}^{s_0}(0,T;A^{s_0}(\omega')),
\end{equation}
\begin{equation}\label{eq3-pro-dp}
\tilde{p}^h\in\mathrm{C}_{\rm{w}}([0,T];\dot{B}^{s_0}(\omega')),
\end{equation}
where $s_0$ can be taken such that
\begin{equation}\label{r0}
1<s_0\leq\min\left\{s_1,s_2\right\},
\end{equation}
such that
\begin{equation}\label{eq-ws-reg-f-ath}
\begin{split}
&
-\int_{\omega'_T}\mathbf{u}\cdot\varphi_t\,d\mathbf{x}dt
+\int_{\omega'_T}\mathbf{Q}:\mathbf{\nabla}\mathbf{\varphi}\,d\mathbf{x}dt
+\int_{\omega'_T}\mathbf{q}\cdot\mathbf{\varphi}\,d\mathbf{x}dt\\
&
=
\int_{\omega'_T}p^0\mathrm{div}\mathbf{\varphi}\,d\mathbf{x}dt-
\int_{\omega'_T}\tilde{p}^h\frac{\partial\mathrm{div}\mathbf{\varphi}}{\partial\,t}\,d\mathbf{x}dt+
\int_{\omega'}\mathbf{u}(0)\cdot\mathbf{\varphi}(0)\,dt
\end{split}
\end{equation}
for all $\varphi\in\mathbf{C}^{\infty}(\omega'_T)$, with $\mathrm{supp\,}\varphi\subset\subset\omega'\times[0,T)$. %
In addition, the following estimates hold
\begin{equation}\label{b-p0-f}
\|p^0\|_{\mathbf{L}^{s_0}(\omega'_T)}\leq
C_1\left(\|\mathbf{Q}\|_{\mathbf{L}^{s_1}(\omega'_T)}+\|\mathbf{q}\|_{\mathbf{L}^{s_2}(\omega'_T)}\right),
\end{equation}
\begin{equation}\label{b-ph-f}
\|\tilde{p}^h\|_{\mathrm{L}^{\infty}(0,T;\mathbf{L}^{s_0}(\omega'))}
\leq C_2 \left(
\|\mathbf{u}\|_{\mathrm{L}^{\infty}(0,T;\mathbf{L}^2(\omega'))}+
\|\mathbf{Q}\|_{\mathbf{L}^{s_1}(\omega'_T)}+
\|\mathbf{q}\|_{\mathbf{L}^{s_2}(\omega'_T)}\right).
\end{equation}
where $C_1$ and $C_2$ are positive constants depending only on $s_i$ ($i=0,1,2$), $N$ and $\omega'_T$.
\end{proposition}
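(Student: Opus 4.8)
The plan is to reduce the statement to the known pressure-decomposition result of Wolf by treating the extra zeroth-order term $\mathbf{q}$ on the same footing as the distributional divergence of a first-order term. Specifically, I would first test equation~(\ref{eq1-pro-dp}) only against $\varphi\in\mathbf{C}^{\infty}_0(\omega'_T)$ with $\mathrm{div}\,\varphi=0$ and localize in time; the goal is to recover, for each $t$, a pressure functional on $\omega'$. Following Wolf~\cite{Wolf-2007}, the mechanism is the following: define the operator that, given an $\mathbf{L}^{s}(\omega')$-vector field, solves a suitable Stokes-type problem with homogeneous Dirichlet data on the smooth domain $\omega'$ (using $\partial\omega'\in\mathrm{C}^2$), and use the resulting estimates to associate to the right-hand side of~(\ref{eq1-pro-dp}) a function in $\mathrm{L}^{s_0}(0,T;A^{s_0}(\omega'))$ plus a harmonic part. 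The only new ingredient compared with~\cite[Section~2]{Wolf-2007} is that the test relation now contains $\int_{\omega'_T}\mathbf{q}\cdot\varphi$ with $\mathbf{q}\in\mathbf{L}^{s_2}(\omega'_T)$; since on the smooth bounded domain $\omega'$ one has the embedding $\mathbf{L}^{s_2}(\omega')\hookrightarrow\mathbf{W}^{-1,\tilde{s}}(\omega')$ for an appropriate $\tilde{s}$, the term $\mathbf{q}$ can be absorbed into a modified first-order tensor, or handled directly by the same Bogovski\u{\i}-type right-inverse of the divergence, at the cost of replacing $s_1$ by $\min\{s_1,s_2\}$ in all the estimates — which is exactly the content of~(\ref{r0}), (\ref{b-p0-f}) and~(\ref{b-ph-f}).

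The key steps, in order, are: (i) Fix a smooth cutoff in time and, for a.e.\ $t$, interpret the spatial functional $\varphi\mapsto\int_{\omega'}(\mathbf{Q}(t):\nabla\varphi+\mathbf{q}(t)\cdot\varphi)\,d\mathbf{x}$ on divergence-free test fields; by De Rham's theorem on $\omega'$ this is $\int_{\omega'}\pi(t)\,\mathrm{div}\,\varphi$ for some $\pi(t)\in\mathrm{L}^{s_0}(\omega')$ with $\int_{\omega'}\pi(t)=0$, and the Bogovski\u{\i} estimate on the $\mathrm{C}^2$-domain $\omega'$ gives $\|\pi(t)\|_{\mathrm{L}^{s_0}(\omega')}\le C(\|\mathbf{Q}(t)\|_{\mathrm{L}^{s_1}}+\|\mathbf{q}(t)\|_{\mathrm{L}^{s_2}})$. (ii) Decompose $\pi(t)$ via the Helmholtz--Weyl decomposition adapted to the spaces $A^{s_0}(\omega')$ and $\dot{B}^{s_0}(\omega')$ of~(\ref{set-A})--(\ref{set-Bpt}), writing $\pi=p^0+(\text{harmonic part})$; integrating~(\ref{eq1-pro-dp}) in time against test functions that need not be divergence-free then produces the time-derivative term $-\int\tilde{p}^h\,\partial_t\mathrm{div}\,\varphi$ and the initial term $\int_{\omega'}\mathbf{u}(0)\cdot\varphi(0)$, exactly as in~\cite[Theorem~2.6]{Wolf-2007}. (iii) Upgrade the regularity in time: $p^0\in\mathrm{L}^{s_0}(0,T;A^{s_0})$ follows from the $t$-wise estimate in~(i) together with $\mathbf{Q}\in\mathbf{L}^{s_1}(\omega'_T)$, $\mathbf{q}\in\mathbf{L}^{s_2}(\omega'_T)$; weak continuity $\tilde{p}^h\in\mathrm{C}_{\mathrm{w}}([0,T];\dot{B}^{s_0})$ comes from the fact that $\tilde{p}^h$ is, up to the harmonic projection, a primitive in time of the data and that $\mathbf{u}\in\mathrm{C}_{\mathrm{w}}([0,T];\mathbf{H})$ by~(\ref{u-wc-P1}), yielding the bound~(\ref{b-ph-f}) with the $\mathrm{L}^\infty(0,T;\mathbf{L}^2(\omega'))$-norm of $\mathbf{u}$ appearing through $\mathbf{u}(0)$. (iv) Uniqueness: if two decompositions satisfy~(\ref{eq-ws-reg-f-ath}), subtracting and testing against fields with prescribed divergence shows $p^0_1=p^0_2$ a.e.\ and then $\tilde{p}^h_1=\tilde{p}^h_2$, using that the only harmonic function in $\dot{B}^{s_0}(\omega')$ pairing trivially with all such divergences is zero.

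The main obstacle I expect is purely bookkeeping rather than conceptual: one must check that the zeroth-order term $\mathbf{q}$ does not destroy the separation between the "instantaneous" pressure $p^0\in A^{s_0}$ and the "memory" pressure $\tilde{p}^h\in\dot{B}^{s_0}$, i.e.\ that $\mathbf{q}$ contributes only to the $p^0$-part (the part tested against $\mathrm{div}\,\varphi$ at fixed time) and not to the harmonic time-primitive part. This is true because $\mathbf{q}$, unlike $\mathbf{u}_t$, is an honest $\mathbf{L}^{s_2}$-function in space-time and so enters through the same Bogovski\u{\i}/De Rham argument as $\mathbf{Q}$; the only care needed is to track the exponent, which is why~(\ref{r0}) takes the minimum of $s_1$ and $s_2$. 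Apart from this, everything is a line-by-line repetition of~\cite[Section~2]{Wolf-2007} with $\mathbf{Q}$ replaced by the pair $(\mathbf{Q},\mathbf{q})$, so I would state the lemma's proof as "the argument of~\cite[Theorem~2.6]{Wolf-2007} applies verbatim once one observes that $\mathbf{q}\in\mathbf{L}^{s_2}(\omega'_T)$ can be incorporated into the right-hand side of the Stokes problem without affecting the structure of the decomposition, at the expense of replacing $s_1$ by $\min\{s_1,s_2\}$ in~(\ref{b-p0-f})--(\ref{b-ph-f})", and then carry out steps~(i)--(iv) above to make this precise.
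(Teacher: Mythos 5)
Your overall strategy is the same as the paper's (which itself follows Wolf's construction), but step~(i) is misstated in a way that matters.  At a \emph{fixed} time $t$, the spatial functional
$\varphi\mapsto\int_{\omega'}(\mathbf{Q}(t):\nabla\varphi+\mathbf{q}(t)\cdot\varphi)\,d\mathbf{x}$
does \emph{not} vanish on divergence-free test fields -- only the full space-time identity (\ref{eq1-pro-dp}), which also contains the $\mathbf{u}_t$-term, does.  So De Rham/Bogovski\u{\i} cannot be applied directly to $\mathbf{Q}(t)$ and $\mathbf{q}(t)$ at fixed $t$; one must first integrate (\ref{eq1-pro-dp}) in time (testing with separated $\varphi=\psi(\mathbf{x})\eta(t)$, $\mathrm{div}\,\psi=0$, $\eta\in\mathrm{C}^\infty_0(0,T)$).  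This turns the time derivative into $\mathbf{u}(t)-\mathbf{u}(0)$ and replaces $\mathbf{Q},\mathbf{q}$ by their time-primitives $\widetilde{\mathbf{Q}}(t)=\int_0^t\mathbf{Q}$, $\widetilde{\mathbf{q}}(t)=\int_0^t\mathbf{q}$; only \emph{then} does the spatial functional vanish on divergence-free $\psi$ for each $t$, and Bogovski\u{\i}/Piletskas yields a pressure primitive $\tilde p(t)\in\mathrm{L}^{s_0}(\omega')$ with mean zero.  The Helmholtz--Weyl decomposition is applied to this $\tilde p(t)=\tilde p^0(t)+\tilde p^h(t)$, and $p^0$ is obtained afterwards as $p^0=\partial_t\tilde p^0$ (after showing $\tilde p^0\in\mathrm{W}^{1,s_0}(0,T;\mathrm{A}^{s_0})$ by testing with $\psi=\nabla\phi$).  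In other words, the correct order is time-integration $\to$ De Rham $\to$ Helmholtz--Weyl $\to$ differentiate the $\mathrm{A}^{s_0}$-part in time, whereas your sketch applies De Rham first at fixed $t$ and then integrates, which does not parse.

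A second, smaller inaccuracy: your ``main obstacle'' paragraph asserts that $\mathbf{q}$ contributes only to $p^0$ and not to the harmonic component $\tilde p^h$.  This is not what the proposition (or its proof) says: $\tilde p^h$ is obtained as the harmonic part of the full Bogovski\u{\i} pressure $\tilde p$, which depends on $\widetilde{\mathbf{q}}$ as well, and indeed estimate (\ref{b-ph-f}) explicitly carries $\|\mathbf{q}\|_{\mathbf{L}^{s_2}(\omega'_T)}$ on its right-hand side.  What \emph{is} true, and what you should say instead, is that $\mathbf{q}$ enters exactly the same Bogovski\u{\i}/Helmholtz--Weyl machinery as $\mathbf{Q}$ (as a vector rather than a matrix datum), costing only the replacement of $s_1$ by $\min\{s_1,s_2\}$ in both (\ref{b-p0-f}) and (\ref{b-ph-f}).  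With the reordering of step~(i) and this correction, your argument matches the paper's proof, which is itself a line-by-line adaptation of Wolf's.
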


\begin{proof}
Let $\psi\in\mathbf{C}^{\infty}_0(\omega')$ with $\mathrm{div}\psi=0$ and let $\eta\in\mathrm{C}^{\infty}_0(0,T)$. %
Inserting $\mathbf{\varphi}=\mathbf{\psi}\eta$ into (\ref{eq1-pro-dp}) and using Fubini's theorem, we obtain
\begin{equation*}
-\int_0^T\alpha\eta'\,dt=\int_0^T\beta\eta\,dt+\int_0^T\gamma\eta\,dt,
\end{equation*}
where for $t\in [0,T]$
\begin{equation*}
\alpha(t):=\int_{\omega'}\mathbf{u}(t)\cdot\mathbf{\psi}\,d\mathbf{x},\quad
\beta(t):=\int_{\omega'}\mathbf{Q}(t):\mathbf{\nabla\,\psi}\,d\mathbf{x},\quad
\gamma(t):=\int_{\omega'}\mathbf{q}(t)\cdot\mathbf{\psi}\,d\mathbf{x}.
\end{equation*}
Since $\mathbf{Q}\in\mathbf{L}^{s_1}(\omega'_T)$ and  $\mathbf{q}\in\mathbf{L}^{s_2}(\omega'_T)$, we have $\beta\in\mathrm{L}^{s_1}(0,T)$ and $\gamma\in\mathrm{L}^{s_2}(0,T)$. In consequence, $\alpha\in\mathrm{W}^{1,s_0}(\omega'_T)$ for any $s_0$ such that $1<s_0\leq\min\{s_1,s_2\}$. %
By Sobolev's imbedding theorem, $\alpha$ is represented by a continuous function, which we still denote by $\alpha$. %
Using integration by parts, we can represent
\begin{equation}\label{abg-t}
\alpha(t)=\alpha(0)+\int_0^t\beta(s)\,ds+\int_0^t\gamma(s)\,ds\quad \forall\ t\in(0,T).
\end{equation}
Let $t\in(0,T)$ be arbitrarily chosen. %
Using Fubini's theorem, the identity (\ref{abg-t}) reads
\begin{equation*}\label{eq01-p-Bog}
\int_{\omega'}\left[\left(\mathbf{u}(t)-\mathbf{u}(0)\right)\cdot\mathbf{\psi}+\widetilde{\mathbf{q}}(t)\cdot\mathbf{\psi}
+\widetilde{\mathbf{Q}}(t):\mathbf{\nabla\psi}\right]d\mathbf{x}=0,
\end{equation*}
where
\begin{equation*}
\widetilde{\mathbf{Q}}(t):=\int_0^t\mathbf{Q}(s)\,ds,\quad
\widetilde{\mathbf{q}}(t):=\int_0^t\mathbf{q}(s)\,ds.
\end{equation*}
Now, by the results of M.E.~Bogovski\u{i} (\emph{cf.} \cite[Theorem III.3.1]{Galdi-1994}) and of K.I.~Piletskas (\emph{cf.} \cite[Theorem III.5.2]{Galdi-1994}), there exists a unique function
\begin{equation*}
\tilde{p}(t)\in\mathrm{L}^{s_0}(\omega')\quad\mbox{with}\ \int_{\omega'}\tilde{p}(t)\,d\mathbf{x}=0
\end{equation*}
such that
\begin{equation}\label{eq1-p-Bog}
\int_{\omega'}\left[\left(\mathbf{u}(t)-\mathbf{u}(0)\right)\cdot\mathbf{\psi}+\widetilde{\mathbf{q}}(t)\cdot\mathbf{\psi}
+\widetilde{\mathbf{Q}}(t):\mathbf{\nabla\psi}\right]d\mathbf{x}=
\int_{\omega'}\tilde{p}(t)\mathrm{div}\psi\,d\mathbf{x}
\end{equation}
for all $\psi\in\mathbf{W}^{1,s_0'}_0(\omega')$. %
In addition,
\begin{equation}\label{pro-b-ph-f}
\|\tilde{p}^h(t)\|_{\mathbf{L}^{s_0}(\omega')}
\leq C\left(
\|\mathbf{u}(t)-\mathbf{u}(0)\|_{\mathbf{L}^{s_0}(\omega')}+
\|\widetilde{\mathbf{Q}}(t)\|_{\mathbf{L}^{s_0}(\omega')}+
\|\widetilde{\mathbf{q}}(t)\|_{\mathbf{L}^{s_0}(\omega')}\right).
\end{equation}
On the other hand, by the application of Helmholtz-Weyl decomposition of $\mathbf{L}^{s_0}(\omega')$ (\emph{cf.} \cite[Theorem 1.4]{SS-1992}), there exist $\tilde{p}^0(t)\in A^{s_0}(\omega')$ and $\tilde{p}^h(t)\in\dot{B}^{s_0}(\omega')$ such that
\begin{equation}\label{ptil-0h}
\tilde{p}(t)=\tilde{p}^0(t)+\tilde{p}^h(t)\quad\mbox{in}\quad \omega'
\end{equation}
and where $\mathrm{A}^r(\omega')$ and $\mathrm{\dot{B}}^r(\omega')$ are defined in  (\ref{set-A})-(\ref{set-Bpt}).
Moreover, the sum $A^{s_0}(\omega')+\dot{B}^{s_0}(\omega')$ is direct. %
Now, from (\ref{eq1-p-Bog}) and, as a consequence of the assumption (\ref{u-wc-P1}), we can infer that
\begin{equation}\label{ptil-Cw}
\tilde{p}\in\mathrm{C}_{\rm{w}}([0,T];\mathrm{L}^{s_0}(\omega')).
\end{equation}
From (\ref{ptil-0h}) and (\ref{ptil-Cw}) it follows that
\begin{equation}\label{ptil-0-Cw}
\tilde{p}^0\in\mathrm{C}_{\rm{w}}([0,T];A^{s_0}(\omega')).
\end{equation}
\begin{equation}\label{ptil-h-Cw}
\tilde{p}^h\in\mathrm{C}_{\rm{w}}([0,T];\dot{B}^{s_0}(\omega')).
\end{equation}
As a consequence of (\ref{pro-b-ph-f}), (\ref{ptil-0h}), we can derive (\ref{b-ph-f}). %
Moreover, inserting $\psi=\nabla\phi$ in (\ref{eq1-p-Bog}), for $\phi\in\mathrm{C}_0^{\infty}(\omega')$, using (\ref{ptil-0h}), integrating by parts the resulting equation and observing that, in view of (\ref{ptil-h-Cw}), $\triangle\tilde{p}^h=0$, and, in of view of (\ref{u-wc-P1}), $\mathrm{div}\mathbf{u}=0$, we obtain
\begin{equation}\label{eq2-p-Bog}
\int_{\omega'}\left(\widetilde{\mathbf{q}}(t)\cdot\mathbf{\nabla}\phi
+\widetilde{\mathbf{Q}}(t):\mathbf{\nabla}^2\phi\right)d\mathbf{x}=
\int_{\omega'}\tilde{p}^0(t)\triangle\phi\,d\mathbf{x}
\end{equation}
for all $\phi\in\mathrm{C}_0^{\infty}(\omega')$. %
Now, using (\ref{eq2-p-Bog}) and proceeding as in \cite[pp. 115-116]{Wolf-2007}, we can prove that $\tilde{p}^0\in\mathrm{W}^{1,s_0}(0,T;A^{s_0}(\omega'))$ and
\begin{equation}\label{pro-b1-ph-f}
\|p^0\|_{\mathbf{L}^{s_0}(\omega'_T)}
\leq C\left(
\|\mathbf{Q}\|_{\mathbf{L}^{s_1}(\omega'_T)}+
\|\mathbf{q}\|_{\mathbf{L}^{s_2}(\omega'_T)}\right),\quad\mbox{where}\quad p^0:=\frac{\partial\,\tilde{p}^0}{\partial\,t}
\end{equation}
and the constant $C$ depends only on $s_i$ ($i=1,2,3$), $N$ and $\omega'_T$. %
Whence (\ref{b-p0-f}) holds.
Finally, the identity (\ref{eq-ws-reg-f-ath}) follows by integrating (\ref{eq1-p-Bog}) over $(0,T)$, replacing there $\varphi$ by $\frac{\partial\,\varphi}{\partial\,t}$ and using (\ref{ptil-0h}), (\ref{ptil-0-Cw}), (\ref{ptil-h-Cw}) and the definition of $p^0$ given in (\ref{pro-b1-ph-f}).
The uniqueness of $p^0$ and $\tilde{p}^h$ follow from (\ref{b-p0-f}) and (\ref{b-ph-f}), respectively.
\end{proof}

\section{Decomposition of the pressure.}\label{Sect-Dec-p}

Let us continue with the proof of Theorem~\ref{th-exst-wsqv}. Using the results of the previous section, we shall decompose the pressure into two different components. %
For, let $\omega'$ be a domain in the conditions of the previous section (see (\ref{om-l})).
Clearly, in view of (\ref{eq-ws-reg-em}) and with the notation introduced in (\ref{THF-Lr}), we can write
\begin{equation}\label{eq-ws-reg-em-ol}
-\int_{\omega'_T}\mathbf{u}_{\epsilon_m}\cdot\varphi_t\,d\mathbf{x}dt+
\alpha\int_{\omega'_T}|\mathbf{u}_{\epsilon_m}|^{\sigma-2}\mathbf{u}_{\epsilon_m}\cdot\varphi\,d\mathbf{x}dt+ \int_{\omega'_T}\mathbf{Q}_{\epsilon_m}:\mathbf{\nabla\varphi}\,d\mathbf{x}dt=0
\end{equation}
for all $\varphi\in\mathbf{C}^{\infty}_0(\omega'_T)$ with $\mathrm{div\,}\varphi=0$ and where $\omega'_T:=\omega'\times(0,T)$. %
The results (\ref{eq-ws-reg-em-ol}) and (\ref{u-wcont})  allow us to apply Proposition~\ref{prop1-dec-p} with $\mathbf{Q}=\mathbf{Q}_{\epsilon_m}$, $\mathbf{q}=\alpha|\mathbf{u}_{\epsilon_m}|^{\sigma-2}\mathbf{u}_{\epsilon_m}$,
$s_1=r$, $s_2=\sigma'$ and
\begin{equation}\label{rr-0}
s_0=r_0:=\min\{r,\sigma'\}.
\end{equation}
Observe that by (\ref{r}), $r\leq 2$ and consequently $r_0\leq 2$. %
Therefore we can say that exist unique functions
\begin{eqnarray}
% \nonumber to remove numbering (before each equation)
  & &p^0_{\epsilon_m}\in\mathrm{L}^{r_0}(0,T;\mathrm{A}^{r_0}(\omega')),\label{p0-set-A}\\
  & &\tilde{p}^h_{\epsilon_m}\in\mathrm{C}_{\mathrm{w}}([0,T];\mathrm{\dot{B}}^{r_0}(\omega'))\label{ph-set-Bpt},
\end{eqnarray}
such that
\begin{equation}\label{eq-ws-reg-em-ath}
\begin{split}
& -\int_{\omega'_T}\mathbf{u}_{\epsilon_m}\cdot\varphi_t\,d\mathbf{x}dt+
\alpha\int_{\omega'_T}|\mathbf{u}_{\epsilon_m}|^{\sigma-2}\mathbf{u}_{\epsilon_m}\cdot\mathbf{\varphi}\,d\mathbf{x}dt+ \int_{\omega'_T}\mathbf{Q}_{\epsilon_m}:\mathbf{\nabla}\mathbf{\varphi}\,d\mathbf{x}dt=\\
& \int_{\omega'_T}p^0_{\epsilon_m}\mathrm{div}\mathbf{\varphi}\,d\mathbf{x}dt-
  \int_{\omega'_T}\tilde{p}^h_{\epsilon_m}\frac{\partial\mathrm{div}\mathbf{\varphi}}{\partial\,t}\,d\mathbf{x}dt+
  \int_{\omega'}\mathbf{u}_0\cdot\mathbf{\varphi}(0)\,dt
\end{split}
\end{equation}
for all $\varphi\in\mathbf{C}^{\infty}(\omega'_T)$, with $\mathrm{supp\,}\varphi\subset\subset\omega'\times[0,T)$. %
In addition, by the same result, the following estimates hold
\begin{equation}\label{b-p0-em}
\|p^0_{\epsilon_m}\|_{\mathbf{L}^{r_0}(\omega'_T)}\leq
C_1\left(\|\mathbf{Q}_{\epsilon_m}\|_{\mathbf{L}^r(\omega'_T)}+\|\mathbf{u}_{\epsilon_m}\|_{\mathbf{L}^{\sigma'}(\omega'_T)}\right),
\end{equation}
\begin{equation}\label{b-ph-em}
\|\tilde{p}^h_{\epsilon_m}\|_{\mathrm{L}^{\infty}(0,T;\mathbf{L}^{r_0}(\omega'))}\leq C_2
\left(
\|\mathbf{u}_{\epsilon_m}\|_{\mathrm{L}^{\infty}(0,T;\mathbf{H})}+
\|\mathbf{Q}_{\epsilon_m}\|_{\mathbf{L}^r(\omega'_T)}+\|\mathbf{u}_{\epsilon_m}\|_{\mathbf{L}^{\sigma'}(\omega'_T)}\right),
\end{equation}
where $C_1$ and $C_2$ are positive constants depending only on $q$, $\sigma'$, $N$ and $\omega'_T$. %
Then, from (\ref{p0-set-A}) and (\ref{ph-set-Bpt}) and by means of reflexivity, we get, passing to a subsequence if needed, that
\begin{equation}\label{conv-p0-em}
p^0_{\epsilon_m}\to \underline{p}^0\quad\mbox{in}\quad\mathrm{L}^r(0,T;\mathrm{A}^{r_0}(\omega')),\quad\mbox{as}\quad m\to\infty,
\end{equation}
\begin{equation}\label{conv-ph-em}
\tilde{p}^h_{\epsilon_m}\to \underline{\tilde{p}}^h\quad\mbox{in}\quad\mathrm{L}^{N+1}(0,T;\mathrm{\dot{B}}^{r_0}(\omega')),\quad\mbox{as}\quad m\to\infty.
\end{equation}
Now we pass to the limit $m\to\infty$ in (\ref{eq-ws-reg-em-ath}) by using the convergence results (\ref{convg-La}), (\ref{convg-S-b'}), (\ref{convg-sig-b'}) and (\ref{convg-Phiuxu}), together with the identities $\widetilde{\mathbf{u}}=|\mathbf{u}|^{\sigma-2}\mathbf{u}$ and
$\mathbf{G}=\mathbf{u}\otimes\mathbf{u}$, and also the convergence results (\ref{conv-p0-em}) and (\ref{conv-ph-em}). %
Then we compare this limit equation with that one resulting from applying (\ref{eq-ws-reg-em-ath}) to the limit equation (\ref{limit-eq-SH}), considered for all $\varphi\in\mathbf{C}^{\infty}_0(\omega'_T)$, with $\mathrm{div\,}\varphi=0$, and with the aforementioned identities. %
This procedure yields the existence of unique functions
$p^0$ and $\tilde{p}^h$ satisfying to (\ref{p0-set-A})-(\ref{b-ph-em}). %
Then, since $p^0$ and $\tilde{p}^h$ are uniquely defined, we see that
$$\underline{p}^0=p^0\quad\mbox{and}\quad\underline{\tilde{p}}^h=\tilde{p}^h.$$ %
Proceeding as in \cite[p. 126]{Wolf-2007}, letting $a$ be such that $1<a<\infty$, using the well-known local regularity theory, the compact imbedding $\mathbf{W}^{3,a}(\omega)\hookrightarrow\mathbf{W}^{2,a}(\omega)$ and Lebesgue's theorem of dominated convergence, we can prove that
\begin{equation}\label{conv-ph-em-str}
\tilde{p}^h_{\epsilon_m}\to \tilde{p}^h\quad\mbox{strongly in}\quad\mathrm{L}^{a}(0,T;\mathbf{W}^{2,a}(\omega)),\quad\mbox{as}\quad m\to\infty,
\end{equation}
where $\omega$ is a fixed but arbitrary open bounded subset of $\Omega$ such that
\begin{equation}\label{om-fi}
\omega\subset\subset\omega'\subset\subset\Omega,\quad\mbox{with}\ \partial\omega\ \mbox{Lipschitz}.
\end{equation}
Let us set now
\begin{equation}\label{v-em}
\mathbf{v}_{\epsilon_m}:=\mathbf{u}_{\epsilon_m}+\mathbf{\nabla}\tilde{p}^h_{\epsilon_m},
\end{equation}
\begin{equation}\label{v}
\mathbf{v}:=\mathbf{u}+\mathbf{\nabla}\tilde{p}^h.
\end{equation}
Then, combining (\ref{eq-ws-reg-em-ath}) with this same equation when we pass it to the limit $m\to\infty$, and using the definition of the distributive time derivative, we obtain
\begin{equation}\label{dist-t-der}
\begin{split}
   (\mathbf{v}_{\epsilon_m}-\mathbf{v})'= &\ \mathbf{div}\,(|\mathbf{\nabla}\mathbf{u}_{\epsilon_m}|^{q-2}\mathbf{\nabla}\mathbf{u}_{\epsilon_m}-\mathbf{S})-
   \alpha\left(|\mathbf{u}_{\epsilon_m}|^{\sigma-2}\mathbf{u}_{\epsilon_m}-|\mathbf{u}|^{\sigma-2}\mathbf{u}\right) \\
  & -
\mathbf{div}(\mathbf{u}_{\epsilon_m}\otimes\mathbf{u}_{\epsilon_m}\Phi_{\epsilon_m}(|\mathbf{u}_{\epsilon_m}|)-
\mathbf{u}\otimes\mathbf{u})-
\mathbf{\nabla}(p^0_{\epsilon_m}-p^0)
\end{split}
\quad\mbox{in $\mathcal{D}'(\omega'_T)$}
\end{equation}
Proceeding as for (\ref{xi-ve'}), attending to (\ref{conv-p0-em}) and observing that $r_0\leq r$, we can prove that $$(\mathbf{v}_{\epsilon_m}-\mathbf{v})'\in\mathrm{L}^{r_0}(0,T;\mathbf{W}^{-1,r_0}(\omega'))+\mathbf{L}^{\sigma'}(\omega'_T).$$
Now we shall decompose the pressure term $(p^0_{\epsilon_m}-p^0)$ in (\ref{dist-t-der}) into three new functions. %
For that, we need to invoke the following results, whose proofs follow immediately from \cite[Lemmas 2.3 and 2.4]{Wolf-2007}.

\begin{lemma}\label{L-2.4-W}
Let $1<s<\infty$ and $k\in\mathds{N}$.
\begin{enumerate}
\item Then for every $v^{\ast}\in\left(\mathrm{W}_0^{k,s'}(\omega)\right)'$ there exists a unique $v\in\mathrm{W}_0^{k,s}(U)$ such that
\begin{equation*}
\int_{\omega}\rm{D}^{\alpha}v\,\rm{D}^{\alpha}\varphi\,d\mathbf{x}=\langle v^{\ast},\varphi\rangle\quad
\forall\
\varphi\in\mathrm{C}_0^{\infty}(\omega),\quad |\alpha|=k.
\end{equation*}
\item In addition, if exists $\mathcal{H}\in\mathrm{L}^s(\omega)$ such that
\begin{equation*}
\langle v^{\ast},\varphi\rangle=\int_{\omega}\mathcal{H}\rm{D}^{\alpha}\varphi\,d\mathbf{x}\quad
\forall\
\varphi\in\mathrm{C}_0^{\infty}(\omega),\quad |\alpha|=k,
\end{equation*}
then
\begin{equation*}
\|\rm{D}^{\alpha}v\|_{\mathrm{L}^s(\omega)}\leq C\|\mathcal{H}\|_{\mathrm{L}^s(\omega)},
\end{equation*}
where $C$ is a positive constant depending on $s$ and on the Calderón-Zigmund inequality's constant.
\end{enumerate}
\end{lemma}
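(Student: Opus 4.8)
The plan is to read both assertions as facts about the constant-coefficient, uniformly elliptic operator of order $2k$ attached to the Dirichlet form
\[
B(v,w):=\sum_{|\alpha|=k}\int_{\omega}\mathrm{D}^{\alpha}v\,\mathrm{D}^{\alpha}w\,d\mathbf{x},
\]
whose Fourier symbol $\sum_{|\alpha|=k}\xi^{2\alpha}$ is homogeneous of degree $2k$ and strictly positive on the unit sphere, hence comparable to $|\xi|^{2k}$. With this identification, part (1) is the statement that $v\mapsto B(v,\cdot)$ is an isomorphism of $\mathrm{W}_0^{k,s}(\omega)$ onto $\bigl(\mathrm{W}_0^{k,s'}(\omega)\bigr)'$, and part (2) is the accompanying Calder\'on--Zygmund a priori bound; both are instances of the $\mathrm{L}^s$-theory for this operator under homogeneous Dirichlet data, which is exactly what \cite[Lemmas 2.3 and 2.4]{Wolf-2007} supply.

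First I would settle the Hilbert case $s=2$: since $\omega$ is bounded with $\partial\omega\in\mathrm{C}^2$, the Poincar\'e inequality on $\mathrm{W}_0^{k,2}(\omega)$ makes $v\mapsto B(v,v)^{1/2}$ an equivalent norm, so $B$ is bounded and coercive and Lax--Milgram yields, for each $v^{\ast}\in\mathrm{W}^{-k,2}(\omega)$, a unique $v\in\mathrm{W}_0^{k,2}(\omega)$ with $B(v,\varphi)=\langle v^{\ast},\varphi\rangle$ for all $\varphi\in\mathrm{C}_0^{\infty}(\omega)$. Next I would prove the $\mathrm{L}^s$ estimate: when $v^{\ast}$ is given in divergence form, $\langle v^{\ast},\varphi\rangle=\sum_{|\alpha|=k}\int_{\omega}\mathcal{H}_{\alpha}\mathrm{D}^{\alpha}\varphi\,d\mathbf{x}$ with $\mathcal{H}_{\alpha}\in\mathrm{L}^2(\omega)\cap\mathrm{L}^s(\omega)$, represent the Step~1 solution through the fundamental solution of the operator, differentiate $k$ times to write each $\mathrm{D}^{\beta}v$ as a Calder\'on--Zygmund singular integral of the $\mathcal{H}_{\alpha}$, apply the Calder\'on--Zygmund inequality on $\mathds{R}^N$, and transfer the bound to $\omega$ by a finite covering and a reflection argument using the $\mathrm{W}_0^{k,s}$ boundary condition and the regularity of $\partial\omega$; this gives $\|\mathrm{D}^{\beta}v\|_{\mathrm{L}^s(\omega)}\leq C\sum_{|\alpha|=k}\|\mathcal{H}_{\alpha}\|_{\mathrm{L}^s(\omega)}$ with $C$ depending only on $s$, $N$, $k$ and the Calder\'on--Zygmund constant. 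For $k=1$ this is classical; for general $k$ it is the Agmon--Douglis--Nirenberg estimate.

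Finally, to obtain (1) for an arbitrary $v^{\ast}\in\bigl(\mathrm{W}_0^{k,s'}(\omega)\bigr)'$ with $s\neq 2$, I would use density of the divergence-form data together with the estimate just proved to extend the solution map to a bounded operator $\bigl(\mathrm{W}_0^{k,s'}(\omega)\bigr)'\to\mathrm{W}_0^{k,s}(\omega)$; injectivity comes from the estimate with $\mathcal{H}_{\alpha}\equiv 0$, and surjectivity from duality (or the open mapping theorem together with density). Statement (2) is then the second step read off for a single multi-index, where $\mathrm{D}^{\alpha}$ enters linearly. The hard part is precisely that $\mathrm{L}^s$ estimate — the Calder\'on--Zygmund/ADN bound for a polyharmonic-type operator on a smooth bounded domain with zero Dirichlet data; everything else (Lax--Milgram, density, duality) is routine. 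Since that estimate is exactly \cite[Lemmas 2.3 and 2.4]{Wolf-2007}, the proof is, as claimed, immediate from those references.
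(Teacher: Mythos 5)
Your proposal is correct, but there is nothing in the paper to compare it against: the paper offers no proof of this lemma, stating only that it ``follows immediately from [Lemmas 2.3 and 2.4]{Wolf-2007}'', and your reconstruction — Lax--Milgram for $s=2$ via coercivity of $B(v,w)=\sum_{|\alpha|=k}\int_{\omega}\mathrm{D}^{\alpha}v\,\mathrm{D}^{\alpha}w\,d\mathbf{x}$, a Calder\'on--Zygmund/ADN a priori estimate for the $\mathrm{L}^s$ bound (the symbol $\sum_{|\alpha|=k}\xi^{2\alpha}\sim|\xi|^{2k}$ is indeed strictly elliptic), and density plus duality to pass from divergence-form data to general $v^{\ast}\in\bigl(\mathrm{W}_0^{k,s'}(\omega)\bigr)'$ — is precisely the standard machinery that Wolf's cited lemmas encapsulate. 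One point your sketch glosses over, though it is not a gap relative to the paper because the paper defers it to Wolf as well: a finite covering and local reflection near the $\mathrm{C}^2$ boundary would a priori yield a constant depending on $\omega$, whereas the statement of the lemma asserts that $C$ depends only on $s$ and the Calder\'on--Zygmund constant, so a cleaner route to the claimed domain-independence (e.g.\ viewing $\mathcal{H}\mapsto\mathrm{D}^{\alpha}v$ as the $\mathrm{L}^2$-orthogonal projection onto the closed subspace $\{(\mathrm{D}^{\beta}w)_{|\beta|=k}:w\in\mathrm{W}_0^{k,2}(\omega)\}$ and appealing to an $\mathrm{L}^s$-boundedness result for that projection) would be needed if one wanted to reproduce the constant exactly as stated.
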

\noindent By a direct application of the first part of Lemma~\ref{L-2.4-W}, attending to (\ref{convg-S-b'}), (\ref{convg-sig-b'}) and (\ref{convg-Phiuxu}), and to the definitions of $\mathrm{A}^{q'}(\omega')$ and $\mathrm{A}^{q\frac{N+2}{2N}}(\omega')$, there exist unique functions
\begin{equation}\label{E1-p1}
p^1_{\epsilon_m}\in\mathrm{L}^{q'}(0,T;\mathrm{A}^{q'}(\omega')),
\end{equation}
\begin{equation}\label{E1-p2}
p^2_{\epsilon_m}\in\mathrm{L}^{q\frac{N+2}{2N}}(0,T;\mathrm{A}^{q\frac{N+2}{2N}}(\omega')),
\end{equation}
\begin{equation}\label{E1-p3}
p^3_{\epsilon_m}\in\mathrm{L}^{\sigma'}(0,T;\mathrm{W}^{1,\sigma'}_0(\omega')),
\end{equation}
such that
\begin{equation}\label{dec-p1}
\int_{\omega'_T}p^1_{\epsilon_m}\triangle\phi\,d\mathbf{x}dt=
\int_{\omega'_T}(|\mathbf{\nabla\,u}_{\epsilon_m}|^{q-2}\mathbf{\nabla\,u}_{\epsilon_m}- \mathbf{S}):\nabla^2\phi\,d\mathbf{x}dt,
\end{equation}
\begin{equation}\label{dec-p2}
\int_{\omega'_T}p^2_{\epsilon_m}\triangle\phi\,d\mathbf{x}dt=-
\int_{\omega'_T}(\mathbf{u}_{\epsilon_m}\otimes\mathbf{u}_{\epsilon_m}\Phi_{\epsilon_m}(|\mathbf{u}_{\epsilon_m}|)-
\mathbf{u}\otimes\mathbf{u}):\nabla^2\phi\,d\mathbf{x}dt,
\end{equation}
\begin{equation}\label{dec-p3}
\int_{\omega'_T}p^3_{\epsilon_m}\triangle\phi\,d\mathbf{x}dt=\alpha
\int_{\omega'_T}\left(|\mathbf{u}_{\epsilon_m}|^{\sigma-2}\mathbf{u}_{\epsilon_m}-|\mathbf{u}|^{\sigma-2}\mathbf{u}\right)\cdot\nabla\phi\,d\mathbf{x}dt
\end{equation}
for all $\phi\in\mathrm{C}_0^{\infty}(\omega'_T)$. %
In addition, by (\ref{dec-p1})-(\ref{dec-p3}) and a direct application of the second part of Lemma~\ref{L-2.4-W}, the following estimates hold:
\begin{equation}\label{bd2-p1-em}
\|p^1_{\epsilon_m}\|_{\mathbf{L}^{q'}(\omega'_T)}\leq C_1\||\mathbf{\nabla\,u}_{\epsilon_m}|^{q-2}\mathbf{\nabla\,u}_{\epsilon_m}-\mathbf{S}\|_{\mathbf{L}^{q'}(\omega'_T)};
\end{equation}
\begin{equation}\label{bd2-p2-em}
\|p^2_{\epsilon_m}\|_{\mathbf{L}^{q\frac{N+2}{2N}}(\omega'_T)}\leq C_2\|\mathbf{u}_{\epsilon_m}\otimes\mathbf{u}_{\epsilon_m}\Phi_{\epsilon_m}(|\mathbf{u}_{\epsilon_m}|)-
\mathbf{u}\otimes\mathbf{u}\|_{\mathbf{L}^{q\frac{N+2}{2N}}(\omega'_T)};
\end{equation}
\begin{equation}\label{bd2-p3-em}
\|\mathbf{\nabla} p^3_{\epsilon_m}\|_{\mathbf{L}^{\sigma'}(\omega'_T)}\leq C_3\||\mathbf{u}_{\epsilon_m}|^{\sigma-2}\mathbf{u}_{\epsilon_m}-|\mathbf{u}|^{\sigma-2}\mathbf{u}\|_{\mathbf{L}^{\sigma'}(\omega'_T)};
\end{equation}
where $C_1$, $C_2$ and $C_3$ are positive constants depending on $q'$, $q\frac{N+2}{2N}$ and $\sigma'$, respectively, and on the Calderón-Zigmund inequality's constant ($C_3$ depends also on $\alpha$). %
Next, testing (\ref{dist-t-der}) by $\nabla\phi$, with $\phi\in\mathrm{C}_0^{\infty}(\omega_T)$, integrating over $\omega_T$ and using (\ref{eq2-inc-e}) and (\ref{eq3-pro-dp}) together with (\ref{set-Bpt}), and also the identities (\ref{dec-p1})-(\ref{dec-p3}), we obtain
$$p^0_{\epsilon_m}-p^0=p^1_{\epsilon_m}+p^2_{\epsilon_m}+p^3_{\epsilon_m}\,.$$
Inserting this into (\ref{dist-t-der}), it follows that
\begin{equation}\label{dist2-t-der}
\begin{split}
   (\mathbf{v}_{\epsilon_m}-\mathbf{v})'= &\ \mathbf{div}\left(|\mathbf{\nabla}\mathbf{u}_{\epsilon_m}|^{q-2}\mathbf{\nabla}\mathbf{u}_{\epsilon_m}-\mathbf{S}\right)-
   \alpha\left(|\mathbf{u}_{\epsilon_m}|^{\sigma-2}\mathbf{u}_{\epsilon_m}-|\mathbf{u}|^{\sigma-2}\mathbf{u}\right)\\
  &
-\mathbf{div}\left(\mathbf{u}_{\epsilon_m}\otimes\mathbf{u}_{\epsilon_m}\Phi_{\epsilon_m}(|\mathbf{u}_{\epsilon_m}|)-
\mathbf{u}\otimes\mathbf{u}\right)\\
& -
\mathbf{\nabla}p^1_{\epsilon_m}-\mathbf{\nabla}p^2_{\epsilon_m}-\mathbf{\nabla}p^3_{\epsilon_m}
\end{split}
\quad\mbox{in $\mathcal{D}'(\omega_T)$}.
\end{equation}

\section{Definition of the irregularity regions}\label{Sect-Reg-Irr}

Let us consider the following slight modification of the functions (\ref{v-em})-(\ref{v})
\begin{equation}\label{v-em-chi}
\mathbf{w}_{\epsilon_m}:=(\mathbf{v}_{\epsilon_m}-\mathbf{v})\chi_{\omega_T}\equiv\left(\mathbf{u}_{\epsilon_m}+\mathbf{\nabla}\tilde{p}^h_{\epsilon_m}-\left(\mathbf{u}+\mathbf{\nabla}\tilde{p}^h\right)
\right)\chi_{\omega_T},
\end{equation}
where $\chi_{\omega_T}$ denotes the characteristic function of the set $\omega_T:=\omega\times(0,T)$ and $\omega$ satisfies to (\ref{om-fi}). %
Having in mind the extension of (\ref{dist2-t-der}) to $\mathds{R}^{N+1}$, here we shall consider that
\begin{equation}\label{Ups-1}
\mathbf{Q}^1_{\epsilon_m}:=\mathbf{S}-|\mathbf{\nabla}\mathbf{u}_{\epsilon_m}|^{q-2}\mathbf{\nabla}\mathbf{u}_{\epsilon_m}|+p^1_{\epsilon_m}\mathbf{I},
\end{equation}
\begin{equation}\label{Ups-2}
\mathbf{Q}^2_{\epsilon_m}:=\mathbf{u}_{\epsilon_m}\otimes\mathbf{u}_{\epsilon_m}\Phi_{\epsilon_m}(|\mathbf{u}_{\epsilon_m}|)-\mathbf{u}\otimes\mathbf{u}+p^2_{\epsilon_m}\mathbf{I},
\end{equation}
\begin{equation}\label{Ups-3}
\mathbf{q}_{\epsilon_m}:=\alpha\left(|\mathbf{u}_{\epsilon_m}|^{\sigma-2}\mathbf{u}_{\epsilon_m}-|\mathbf{u}|^{\sigma-2}\mathbf{u}\right)+\mathbf{\nabla}p^3_{\epsilon_m},
\end{equation}
are extended from $\omega_T$ to $\mathds{R}^{N+1}$ by zero. %
Now, since $q<q^{\ast}$, we can use (\ref{str-conv-g}), with $s=\gamma=q$, together with (\ref{conv-ph-em-str}), with $a=q$, to prove that
\begin{equation}\label{str-v-em}
\mathbf{w}_{\epsilon_m}\to 0\quad\mbox{strongly in $\mathbf{L}^{q}(\mathds{R}^{N+1})$},\quad \mbox{as}\ m\to\infty.
\end{equation}
Moreover, using (\ref{est-inf-gam}) and again (\ref{conv-ph-em-str}) with $a=q$, we obtain
\begin{equation}\label{un-b-v-em}
\|\nabla\mathbf{w}_{\epsilon_m}\|_{\mathbf{L}^{q}(\mathds{R}^{N+1})}\leq C.
\end{equation}
On the other hand, due to (\ref{convg-S-b'}) and (\ref{bd2-p1-em}), we have
\begin{equation}\label{un-b-T-p1-em}
\|\mathbf{Q}^1_{\epsilon_m}\|_{\mathbf{L}^{q'}(\mathds{R}^{N+1})}\leq C.
\end{equation}
Moreover,   (\ref{unb-uxu-em}) together with (\ref{bd2-p2-em}), and (\ref{str-conv-u-sig'}) together with  (\ref{bd2-p3-em}), justify, respectively, that
\begin{equation}\label{un-b-uxu-p2-em}
\mathbf{Q}^2_{\epsilon_m}\to 0\quad\mbox{strongly in $\mathbf{L}^{\frac{q(N+2)}{2N}}(\mathds{R}^{N+1})$},\quad \mbox{as}\ m\to\infty,
\end{equation}
\begin{equation}\label{un-b-uxu-p3-em}
\mathbf{q}_{\epsilon_m}\to 0\quad\mbox{strongly in $\mathbf{L}^{\sigma'}(\mathds{R}^{N+1})$},\quad \mbox{as}\ m\to\infty.
\end{equation}

In order to define the irregularity regions of the admissible function that we shall test in (\ref{dist2-t-der}), let us set
\begin{equation}\label{f-em}
f_{\epsilon_m}:=\mathcal{M}^{\ast}(|\mathbf{w}_{\epsilon_m}|),
\end{equation}
\begin{equation}\label{g-em}
g_{\epsilon_m}:=\mathcal{M}^{\ast}(|\nabla\mathbf{w}_{\epsilon_m}|) +
\left(\mathcal{M}^{\ast}(|\mathbf{Q}^1_{\epsilon_m}|)\right)^{\frac{1}{q-1}},
\end{equation}
\begin{equation}\label{h-em}
h_{\epsilon_m}:=\left(\mathcal{M}^{\ast}(|\mathbf{Q}^2_{\epsilon_m}|)\right)^{\frac{1}{q-1}},
\end{equation}
\begin{equation}\label{i-em}
i_{\epsilon_m}:=\left(\mathcal{M}^{\ast}(|\mathbf{q}_{\epsilon_m}|)\right)^{\frac{1}{q-1}},
\end{equation}
where $\mathcal{M}^{\ast}:=\mathcal{M}_t\circ\mathcal{M}_{\mathbf{x}}$. Here $\mathcal{M}_t$ and $\mathcal{M}_{\mathbf{x}}$ denote the Hardy-Littlewood maximal operators, which are defined, for some  function $f\in\mathrm{L}^{p}(\mathds{R}^{N+1})$ with $1<p<\infty$, respectively by
\begin{eqnarray*}
% \nonumber to remove numbering (before each equation)
  & \displaystyle \mathcal{M}_t(f)(\mathbf{x},t):=&\sup_{0<r<\infty}\frac{1}{2r}\int_{t-r}^{t+r}|f(\mathbf{x},s)|\,ds, \\
  & \displaystyle \mathcal{M}_{\mathbf{x}}(f)(\mathbf{x},t):=&\sup_{0<R<\infty}\frac{1}{\mathcal{L}_N(B_R(\mathbf{x}))}\int_{B_R(\mathbf{x})}|f(\mathbf{y},s)|\,d\mathbf{y},
\end{eqnarray*}
where $B_R(\mathbf{x})$ denotes the ball of $\mathds{R}^N$ centered at $\mathbf{x}$ and with radius $R>0$. %
Then due to the boundedness of the operator $\mathcal{M}^{\ast}$ from $\mathbf{L}^{p}(\mathds{R}^{N+1})$ into $\mathbf{L}^{p}(\mathds{R}^{N+1})$ for any $p>1$ (see \emph{e.g.} Stein~\cite[p. 5]{Stein-1970}), we obtain
\begin{equation}\label{bd-f-em}
   \|f_{\epsilon_m}\|_{\mathbf{L}^{q}(\mathds{R}^{N+1})}\leq C\|\mathbf{w}_{\epsilon_m}\|_{\mathbf{L}^{q}(\mathds{R}^{N+1})},
\end{equation}
\begin{equation}\label{bd-g-em}
   \|g_{\epsilon_m}\|_{\mathbf{L}^{q}(\mathds{R}^{N+1})}\leq
   C_1\|\nabla\mathbf{w}_{\epsilon_m}\|_{\mathbf{L}^{q}(\mathds{R}^{N+1})} +
   C_2\|\mathbf{Q}^1_{\epsilon_m}\|_{\mathbf{L}^{q'}(\mathds{R}^{N+1})}^{\frac{1}{q-1}},
\end{equation}
\begin{equation}\label{bd-h-em}
   \|\mathcal{M}^{\ast}(|\mathbf{Q}^2_{\epsilon_m}|)\|_{\mathbf{L}^{q\frac{N+2}{2N}}(\mathds{R}^{N+1})}\leq C
   \|\mathbf{Q}^2_{\epsilon_m}\|_{\mathbf{L}^{q\frac{N+2}{2N}}(\mathds{R}^{N+1})},
\end{equation}
\begin{equation}\label{bd-i-em}
   \|\mathcal{M}^{\ast}(|\mathbf{q}_{\epsilon_m}|)\|_{\mathbf{L}^{\sigma'}(\mathds{R}^{N+1})}\leq C
   \|\mathbf{q}_{\epsilon_m}\|_{\mathbf{L}^{\sigma'}(\mathds{R}^{N+1})}.
\end{equation}
Next, let $j_{\epsilon_m}$ be anyone of the functions inside the norms on the left-hand sides of (\ref{bd-f-em})-(\ref{bd-i-em}) and let $s$ be the respective Lebesgue exponent. Using (\ref{un-b-v-em})-(\ref{un-b-uxu-p3-em}), (\ref{bd-f-em})-(\ref{bd-i-em}) and arguing as in \cite[p. 31]{DRW-2010}, we obtain for $j\in\mathds{N}$
\begin{equation*}
\|j_{\epsilon_m}\|_{\mathbf{L}^{s}(\mathds{R}^{N+1})}^s\geq 2^j\ln(2)\inf_{2^{2^j}\leq\tau\leq 2^{2^{j+1}}}\tau^{s}\mathcal{L}_{N+1}\left(\left\{(\mathbf{x},t)\in\mathds{R}^{N+1}:|j_{\epsilon_m}|>\tau\right\}\right).
\end{equation*}
As a consequence, there exists $\lambda_{m,j}\in\left[2^{2^j}, 2^{2^{j+1}}\right]$ such that
\begin{equation}\label{Leb-j}
\mathcal{L}_{N+1}\left\{(\mathbf{x},t)\in\mathds{R}^{N+1}:|j_{\epsilon_m}|>\lambda_{m,j}\right\}\leq
C2^{-j}\lambda_{m,j}^{-p}\,\|j_{\epsilon_m}\|_{\mathbf{L}^{p}(\mathds{R}^{N+1})}.
\end{equation}
Let us consider the following subsets of $\mathds{R}^{N+1}$
\begin{eqnarray}
% \nonumber to remove numbering (before each equation)
  &F_{m,j}:=&\left\{(\mathbf{x},t):|f_{\epsilon_m}|>\lambda_{m,j}\right\}, \label{F-mj} \\
  &G_{m,j}:=&\left\{(\mathbf{x},t):|g_{\epsilon_m}|>\lambda_{m,j}\right\}, \label{G-mj} \\
  &H_{m,j}:=&\left\{(\mathbf{x},t):|h_{\epsilon_m}|>\lambda_{m,j}\right\}\equiv\left\{(\mathbf{x},t):\mathcal{M}^{\ast}(|\mathbf{Q}^2_{\epsilon_m}|>\lambda_{m,j}^{q-1}\right\}, \label{H-mj} \\
  &I_{m,j}:=&\left\{(\mathbf{x},t):|i_{\epsilon_m}|>\lambda_{m,j}\right\}\equiv\left\{(\mathbf{x},t):\mathcal{M}^{\ast}(|\mathbf{q}_{\epsilon_m}|>\lambda_{m,j}^{q-1}\right\}. \label{I-mj}
\end{eqnarray}
Then, using (\ref{Leb-j}) in each case separately, we obtain
\begin{eqnarray}
% \nonumber to remove numbering (before each equation)
  &\mathcal{L}_{N+1}(F_{m,j})\leq &C2^{-j}\lambda_{m,j}^{-q}\,\|f_{\epsilon_m}\|_{\mathbf{L}^{q}(\mathds{R}^{N+1})}, \label{Leb-F} \\
  &\mathcal{L}_{N+1}(G_{m,j})\leq &C2^{-j}\lambda_{m,j}^{-q}\,\|g_{\epsilon_m}\|_{\mathbf{L}^{q}(\mathds{R}^{N+1})}, \label{Leb-G} \\
  &\mathcal{L}_{N+1}(H_{m,j})\leq & C2^{-j}\lambda_{m,j}^{-(q-1)q\frac{N+2}{2N}}\,\|\mathcal{M}^{\ast}(|\mathbf{Q}^2_{\epsilon_m}|)\|_{\mathbf{L}^{q\frac{N+2}{2N}}(\mathds{R}^{N+1})}, \label{Leb-H} \\
  &\mathcal{L}_{N+1}(I_{m,j})\leq & C2^{-j}\lambda_{m,j}^{-(q-1)\sigma'}\,\|\mathcal{M}^{\ast}(|\mathbf{q}_{\epsilon_m}|)\|_{\mathbf{L}^{\sigma'}(\mathds{R}^{N+1})}. \label{Leb-I}
\end{eqnarray}
Now, since $\lambda_{m,j}\in\left[2^{2^j}, 2^{2^{j+1}}\right]$,  we observe that (\ref{str-v-em}), (\ref{bd-f-em}) and (\ref{Leb-F}) on the one hand, (\ref{un-b-uxu-p2-em}), (\ref{bd-h-em}) and (\ref{Leb-H}) on the other, and yet (\ref{un-b-uxu-p3-em}), (\ref{bd-i-em}) and (\ref{Leb-I}) on another one, imply, respectively,
\begin{equation}\label{lsup-FHI}
\limsup_{m\to\infty}\mathcal{L}_{N+1}(F_{m,j})=0,\quad
\limsup_{m\to\infty}\mathcal{L}_{N+1}(H_{m,j})=0,\quad
\limsup_{m\to\infty}\mathcal{L}_{N+1}(I_{m,j})=0.
\end{equation}
Moreover, since $\mathcal{M}^{\ast}$ is subadditive (see \emph{e.g.} Stein~\cite{Stein-1970}), we get from the definitions of $G_{m,j}$, $H_{m,j}$ and $I_{m,j}$ in (\ref{G-mj})-(\ref{I-mj}), using (\ref{Leb-G})-(\ref{Leb-I}) and (\ref{g-em})-(\ref{i-em}), that
\begin{equation}\label{incl-GHI}
G_{m,j}\cup H_{m,j}\cup I_{m,j}\supset\mathcal{O},
\end{equation}
where
\begin{equation*}
\mathcal{O}:= \left\{(\mathbf{x},t)\in\mathds{R}^{N+1}:
\mathcal{M}^{\ast}\left(\left|\nabla\mathbf{w}_{\epsilon_m}\right|\right)+
\rho_{m,j}\left(\mathcal{M}^{\ast}\left(\left|\mathbf{Q}^{1,2}_{\epsilon_m}\right|\right)+
\mathcal{M}^{\ast}\left(\left|\mathbf{q}_{\epsilon_m}\right|\right)\right)<4\lambda_{m,j}\right\},
\end{equation*}
\begin{equation}\label{rho-mj}
\rho_{m,j}:=\lambda_{m,j}^{2-q}
\end{equation}
and, with the notations of (\ref{Ups-1})-(\ref{Ups-2}),
\begin{equation}\label{Upsilon}
\mathbf{Q}^{1,2}_{\epsilon_m}:=\mathbf{Q}_{\epsilon_m}^1+\mathbf{Q}_{\epsilon_m}^2.
\end{equation}
Setting
\begin{equation}\label{E-mj}
E_{m,j}:=\left(F_{m,j}\cup G_{m,j}\cup H_{m,j}\cup I_{m,j}\right)\cap\omega_T,
\end{equation}
we can readily see that due to (\ref{Leb-F})-(\ref{Leb-I}), (\ref{bd-f-em})-(\ref{bd-i-em}) and to
(\ref{str-v-em})-(\ref{un-b-uxu-p3-em}),
\begin{equation}\label{Leb-E}
\mathcal{L}_{N+1}(E_{m,j})<\infty.
\end{equation}
Moreover, due to (\ref{incl-GHI}), we have
\begin{equation}\label{U-cal}
  \left(\mathcal{O}\cup\mathcal{U}\right)\cap\omega_T\subset E_{m,j}\subset \omega_T,
\end{equation}
where here $\mathcal{U}$ is the set $F_{m,j}$ defined in (\ref{F-mj}). %

\section{Construction of a Lipschitz truncation}\label{Sect-Lip-T}
We are now in conditions to define the truncation we shall consider here. %
Let us consider the following family of cubes
\begin{equation}\label{stes-Q}
\mathcal{C}_{r_n}^{\rho_{m,j}}(\mathbf{x}_n,t_n):=
\left\{(\mathbf{y},s)\in\mathds{R}^{N+1}:d_{\rho_{m,j}}\left((\mathbf{x}_n,t_n),(\mathbf{y},s)\right)<r_n\right\},
\end{equation}
where $r_n>0$, $n\in\mathds{N}$ and $d_{\rho_{m,j}}$ is the metric defined by
\begin{equation}\label{metric}
d_{\rho_{m,j}}\left((\mathbf{x}_n,t_n),(\mathbf{y},s)\right):=
\max\left\{|\mathbf{y}-\mathbf{x}_n|,\sqrt{\rho_{m,j}^{-1}|s-t_n|}\right\}.
\end{equation}
By \cite[Theorem VI.1.1]{Stein-1970})), there exists a Whitney covering of $E_{m,j}$ formed by the family of cubes (\ref{stes-Q})-(\ref{metric}) such that
\begin{equation*}\label{W-cov-Emj}
\bigcup_{n\in\mathds{N}}\mathcal{C}_{\frac{1}{2}r_n}^{\rho_{m,j}}(\mathbf{x}_n,t_n)=E_{m,j}.
\end{equation*}
%\begin{equation}\label{Qij-int}
%\mathcal{C}_{s_i}^{\rho_{m,j}}(\mathbf{x}_i,t_i)\cap \mathcal{C}_{r_k}^{\rho_{m,j}}(\mathbf{x}_k,t_k)\neq\emptyset \Rightarrow \frac{1}{2}r_k\leq s_i<2r_k.
%\end{equation}
Moreover, by \cite[Section VI.1.3]{Stein-1970}, there exists a partition of unity $\psi_n$, $n\in\mathds{N}$, associated to the Whitney covering (\ref{stes-Q})-(\ref{metric}) such that
\begin{equation*}
\sum_{k\in\mathds{N}}\psi_k=1\quad\mbox{in}\quad \mathcal{C}_{r_n}^{\rho_{m,j}}(\mathbf{x}_n,t_n).
\end{equation*}
We are now in conditions to define the Lipschitz truncation.
Following ~\cite[Section 3]{DRW-2010} and \cite[Chapter VI]{Stein-1970}, we define the Lipschitz truncation of $\mathbf{w}_{\epsilon_m}$ subordinated to the Whitney covering (\ref{stes-Q})-(\ref{metric}) by
\begin{equation}\label{Lip-trunc}
\mathcal{T}_{m,j}(\mathbf{w}_{\epsilon_m}):=
\left\{
\begin{array}{ll}
  \mathbf{w}_{\epsilon_m} & \mbox{in }\ \omega_T\setminus E_{m,j}\\
  \displaystyle \sum_{n=1}^{\infty}\psi_n\mathbf{w}_{\epsilon_m}\rst{{\mathcal{C}_{r_n}^{\rho_{m,j}}(\mathbf{x}_n,t_n)}} & \mbox{in }\  E_{m,j}\,.
\end{array}\right.
\end{equation}
The idea of this truncation, is to regularize the function $\mathbf{w}_{\epsilon_m}$ by cutting off the regions $E_{m,j}$ of irregularity and then to extend this restricted function by the Whitney covering (\ref{stes-Q})-(\ref{metric}) to the whole domain again. %

Now, let $\xi\in\mathrm{C}_0^{\infty}(\omega_T)$ be a fixed cut-off function  such that $0\leq\xi\leq 1$ in $\omega_T$ and
let us consider the following admissible test function for (\ref{dist2-t-der})
\begin{equation}\label{test-Lip-trunc}
\mathbf{\phi}_{m,j}:=\xi\,\mathcal{T}_{m,j}(\mathbf{w}_{\epsilon_m}).
\end{equation}
In order to establish the main properties of the Lipschitz truncation (\ref{Lip-trunc}) we are interested in, let
\begin{equation}\label{om-xi}
\omega^{\xi}_T:=\mathrm{supp}\,\xi,\quad\mbox{$\xi$ is the cut-off function of (\ref{test-Lip-trunc}).}
\end{equation}
Note that $\omega^{\xi}_T$ is strictly contained in $\omega_T$, because $0\leq\xi\leq 1$ in $\omega_T$.
Let also
$\mathbf{C}_{\rho_{m,j}}^{0,1}(\omega^{\xi}_T)$ be the space of all Lipschitz continuous functions with respect to the metric (\ref{metric}).
From the definition of $\mathbf{w}_{\epsilon_m}$ (see (\ref{v-em-chi})), using (\ref{est-inf-gam}) and (\ref{u-wcont}) together with (\ref{conv-ph-em-str}), with $a=q$, and (\ref{b-ph-em}), we can prove that
\begin{equation*}
\mathbf{w}_{\epsilon_m}\in\mathrm{L}^{\infty}(0,T;\mathbf{L}^2(\omega))\cap\mathrm{L}^{q}(0,T;\mathbf{W}^{1,q}(\omega)).
\end{equation*}
Then, owing to (\ref{lsup-FHI})-(\ref{U-cal}), we can apply directly~\cite[Theorem 3.9, (i)-(iii)]{DRW-2010} to obtain:
\begin{equation}\label{P1-LipT}
\mathcal{T}_{m,j}(\mathbf{w}_{\epsilon_m})\in\mathbf{C}_{\rho_{m,j}}^{0,1}(\omega^{\xi}_T),
\end{equation}
with the norm depending on $N$, $\omega^{\xi}_T$, $\|\mathbf{w}_{\epsilon_m}\|_{\mathbf{L}^1(E_{m,j})}$, $\|\mathbf{w}_{\epsilon_m}\|_{\mathbf{L}^1(\widetilde{\omega}_T)}$, where $\omega^{\xi}_T\subset\subset\widetilde{\omega}_T\subset\subset\omega_T$;
\begin{eqnarray}
% \nonumber to remove numbering (before each equation)
  & & \left\|\mathbf{\nabla}\mathcal{T}_{m,j}(\mathbf{w}_{\epsilon_m})\right\|_{\mathbf{L}^{\infty}(\omega^{\xi}_T)}\leq
C\left(\lambda_{m,j}+\rho_{m,j}^{-1}\delta_{\rho_{m,j},\omega^{\xi}_T}^{-N-3}\|\mathbf{w}_{\epsilon_m}\|_{\mathbf{L}^{1}(E_{m,j})}\right); \label{P2-LipT} \\
  & & \left\|\mathcal{T}_{m,j}(\mathbf{w}_{\epsilon_m})\right\|_{\mathbf{L}^{\infty}(\omega^{\xi}_T)}\leq
C\left(1+\rho_{m,j}^{-1}\delta_{\rho_{m,j},\omega^{\xi}_T}^{-N-2}\|\mathbf{w}_{\epsilon_m}\|_{\mathbf{L}^{1}(E_{m,j})}\right); \label{P3-LipT} \\
  & & \begin{split}
& \left\|\left(\mathcal{T}_{m,j}(\mathbf{w}_{\epsilon_m})\right)'\cdot
  \left(\mathcal{T}_{m,j}(\mathbf{w}_{\epsilon_m})-(\mathbf{w}_{\epsilon_m})\right)\right\|_{\mathbf{L}^{1}(\omega^{\xi}_T\cap E_{m,j})}\leq \\
&  C\rho_{m,j}^{-1}\mathcal{L}_{N+1}(E_{m,j})
  \left(\lambda_{m,j}+\rho_{m,j}^{-1}\delta_{\rho_{m,j},\omega^{\xi}_T}^{-N-3}\|\mathbf{w}_{\epsilon_m}\|_{\mathbf{L}^{1}(E_{m,j})}\right)^2.
\end{split} \label{P4-LipT}
\end{eqnarray}
In (\ref{P2-LipT})-(\ref{P4-LipT}) the constants $C$ are distinct and depend only on $N$, and
\begin{equation}\label{delta}
\delta_{\rho_{m,j},\omega^{\xi}_T}:=d_{\rho_{m,j}}(\omega^{\xi}_T,\omega_T)>0\quad\mbox{due to (\ref{om-xi})}.
\end{equation}
Moreover, according to \cite[Lemma 3.5]{DRW-2010} (see also \cite[Section VI.3]{Stein-1970}),
\begin{equation}\label{P5-LipT}
\left\|\mathcal{T}_{m,j}(\mathbf{w}_{\epsilon_m})\right\|_{\mathbf{L}^{s}(\omega_T)}\leq C
\|\mathbf{w}_{\epsilon_m}\|_{\mathbf{L}^{s}(\omega_T)}\quad\forall\ s: 1\leq s\leq\infty,
\end{equation}
where $C$ depends only on $N$.

\section{Convergence of the approximated extra stress tensor}\label{Sect-Conv-EST}

Proceeding as for (\ref{dist2-t-der}), observing that now the functions are zero outside $\omega_T$ and using the notations (\ref{Upsilon}) and (\ref{v-em-chi})-(\ref{Ups-3}), we obtain
\begin{equation}\label{dist21-t-der}
   \mathbf{w}_{\epsilon_m}'=-\mathbf{div}\mathbf{Q}^{1,2}_{\epsilon_m}-\mathbf{q}_{\epsilon_m}
   \quad\mbox{in $\mathcal{D}'(\omega_T)$}
\end{equation}
Here the distributive time derivative $\mathbf{w}_{\epsilon_m}'$ is such that
\begin{equation}\label{eq-Lr}
\mathbf{w}_{\epsilon_m}'\in\mathrm{L}^{r_0}(0,T;\mathbf{W}^{-1,r_0}(\omega))+\mathbf{L}^{\sigma'}(\omega_T),
\end{equation}
where $r_0$ is defined by (\ref{rr-0}). %
In fact, due to (\ref{convg-S-b'}), (\ref{unb-uxu-em}), (\ref{bd2-p1-em}) and (\ref{bd2-p2-em}) on one hand, and due to (\ref{str-conv-u-sig'}) and (\ref{bd2-p3-em}) on the other, we can prove that
\begin{equation}\label{bd-Ups}
\mathbf{Q}^{1,2}_{\epsilon_m}\in\mathbf{L}^{r_0}(\omega_T)\quad\mbox{and}\quad
\mathbf{q}_{\epsilon_m}\in\mathbf{L}^{\sigma'}(\omega_T).
\end{equation}
As a consequence of (\ref{bd-Ups})$_1$, $\mathbf{div}\mathbf{Q}^{1,2}_{\epsilon_m}\in\mathrm{L}^{r_0}(0,T;\mathbf{W}^{-1,r_0}(\omega))$. %
Now, observing that, by virtue of (\ref{P1-LipT})-(\ref{P3-LipT}) and of the definition of $\xi$, our admissible test function, defined in (\ref{test-Lip-trunc}),
$\mathbf{\phi}_{m,j}\in\mathrm{L}^{r_0'}(0,T;\mathbf{W}^{1,r_0'}_0(\omega))\cap\mathbf{L}^{\sigma}(\omega_T)$. %
Then, from (\ref{dist21-t-der}) and (\ref{eq-Lr}), we infer that
\begin{equation}\label{dist31-t-der}
  \int_0^T\langle\mathbf{w}_{\epsilon_m}'(t),\phi_{m,j}(t)\rangle\,dt= \int_{\omega_T}\mathbf{Q}^{1,2}_{\epsilon_m}:\mathbf{\nabla}\mathbf{\phi}_{m,j}\,d\mathbf{x}dt-
  \int_{\omega_T}\mathbf{q}_{\epsilon_m}\cdot\mathbf{\phi}_{m,j}\,d\mathbf{x}dt.
\end{equation}
On the other hand, owing to (\ref{lsup-FHI})-(\ref{U-cal}) and, in addition, to (\ref{bd-Ups})-(\ref{dist31-t-der}), we can apply \cite[Theorem 3.9, (iv)]{DRW-2010} to prove that for every $\xi\in\mathbf{C}_0^{\infty}(\omega_T)$
\begin{equation}\label{main-r-P2}
\begin{split}
  & \int_0^T\langle\mathbf{w}_{\epsilon_m}'(t),\phi_{m,j}(t)\rangle\,dt=  \\
  & \frac{1}{2}\int_{\omega_T}\left(\left|\mathcal{T}_{m,j}(\mathbf{w}_{\epsilon_m})\right|^2-2\mathbf{w}_{\epsilon_m}\cdot\mathcal{T}_{m,j}(\mathbf{w}_{\epsilon_m})\right)\xi'\,d\mathbf{x}dt+\\
  & \int_{E_{m,j}}\left(\mathcal{T}_{m,j}(\mathbf{w}_{\epsilon_m})\right)'\cdot\left(\mathcal{T}_{m,j}(\mathbf{w}_{\epsilon_m})-\mathbf{w}_{\epsilon_m}\right)\xi\,d\mathbf{x}dt.
\end{split}
\end{equation}
Note that the proof of (\ref{main-r-P2}) is done in \cite[p. 23]{DRW-2010} for $\mathbf{q}_{\epsilon_m}\equiv 0$ in (\ref{dist31-t-der}).
But taking into account (\ref{eq-Lr}), the proof of \cite[Theorem 3.9, (iv)]{DRW-2010} can be repeated almost word by word in our case.

Now, gathering (\ref{dist31-t-der}) and (\ref{main-r-P2}), and expanding the notations (\ref{Upsilon}) and (\ref{v-em-chi})-(\ref{Ups-3}), we obtain
\begin{equation}\label{distpa6-t-der}
\begin{split}
  & \int_{\omega_T}\left(|\mathbf{\nabla}\mathbf{u}_{\epsilon_m}|^{q-2}\mathbf{\nabla}\mathbf{u}_{\epsilon_m}-\mathbf{S}\right):\mathbf{\nabla}(\mathcal{T}_{m,j}(\mathbf{w}_{\epsilon_m}))\,\xi\,d\mathbf{x}dt =\\
    &
  +\int_{\omega_T}\left(\mathbf{S}-|\mathbf{\nabla}\mathbf{u}_{\epsilon_m}|^{q-2}\mathbf{\nabla}\mathbf{u}_{\epsilon_m}\right):\mathcal{T}_{m,j}(\mathbf{w}_{\epsilon_m})\otimes\mathbf{\nabla}\xi\,d\mathbf{x}dt \\
   &
  +\int_{\omega_T}\left(\mathbf{u}_{\epsilon_m}\otimes\mathbf{u}_{\epsilon_m}\Phi_{\epsilon_m}(|\mathbf{u}_{\epsilon_m}|)-\mathbf{u}\otimes\mathbf{u}\right):\mathbf{\nabla}\left(\mathcal{T}_{m,j}(\mathbf{w}_{\epsilon_m})\,\xi\right)\,d\mathbf{x}dt\\ &
  +\alpha\int_{\omega_T}\left(|\mathbf{u}|^{\sigma-2}\mathbf{u}-|\mathbf{u}_{\epsilon_m}|^{\sigma-2}\mathbf{u}_{\epsilon_m}\right)\cdot\mathcal{T}_{m,j}(\mathbf{w}_{\epsilon_m})\,\xi\,d\mathbf{x}dt\\ &
  +\int_{\omega_T}p^1_{\epsilon_m}\mathcal{T}_{m,j}(\mathbf{w}_{\epsilon_m})\cdot\nabla\xi\,d\mathbf{x}dt\\
  &
  +\int_{\omega_T}p^1_{\epsilon_m}\mathrm{div}(\mathcal{T}_{m,j}(\mathbf{w}_{\epsilon_m}))\,\xi\,d\mathbf{x}dt\\
  &
  +\int_{\omega_T}p^2_{\epsilon_m}\mathrm{div}(\mathcal{T}_{m,j}(\mathbf{w}_{\epsilon_m})\,\xi)\,d\mathbf{x}dt\\
  &
  -\int_{\omega_T}\mathbf{\nabla}p^3_{\epsilon_m}\cdot\mathcal{T}_{m,j}(\mathbf{w}_{\epsilon_m})\,\xi\,d\mathbf{x}dt\\
  &
  +\frac{1}{2}\int_{\omega_T}\left(2\mathbf{w}_{\epsilon_m}\cdot\mathcal{T}_{m,j}(\mathbf{w}_{\epsilon_m})-\left|\mathcal{T}_{m,j}(\mathbf{w}_{\epsilon_m})\right|^2\right)\xi'\,d\mathbf{x}dt\\
  &
  +\int_{E_{m,j}}\mathcal{T}_{m,j}'(\mathbf{w}_{\epsilon_m})\cdot\left(\mathbf{w}_{\epsilon_m}-\mathcal{T}_{m,j}(\mathbf{w}_{\epsilon_m})\right)\,\xi\,d\mathbf{x}dt\\
  &:=J_1+J_2+J_3+J_4+J_5+J_6+J_7+J_8+J_9.
  \end{split}
\end{equation}
We claim that, for a fixed $j$,
\begin{equation}\label{distpa7-t-der}
\limsup_{m\to\infty}\left|\int_{\omega_T}\left(|\mathbf{\nabla}\mathbf{u}_{\epsilon_m}|^{q-2}\mathbf{\nabla}\mathbf{u}_{\epsilon_m}-\mathbf{S}\right):\mathbf{\nabla}(\mathcal{T}_{m,j}(\mathbf{w}_{\epsilon_m}))\,\xi\,d\mathbf{x}dt\right|
\leq C2^{-\frac{j}{q}}.
\end{equation}
To prove this, we will carry out the passage to the limit $m\to\infty$ in all absolute values $|J_i|$, $i=1,\dots,9$.

$\bullet\ \limsup_{m\to\infty}(|J_1|+|J_4|)=0$.
Due to (\ref{convg-S-b'}) and (\ref{bd2-p1-em}), $\mathbf{S}-|\mathbf{\nabla}\mathbf{u}_{\epsilon_m}|^{q-2}\mathbf{\nabla}\mathbf{u}_{\epsilon_m}$ and $p^1_{\epsilon_m}$ are uniformly bounded in $\mathbf{L}^{q'}(\omega_T)$. Then, using Hölder's inequality and (\ref{P5-LipT}) together with (\ref{v-em-chi}), led us to
\begin{equation*}
|J_1|+|J_4|\leq C_1\left\|\mathcal{T}_{m,j}(\mathbf{w}_{\epsilon_m})\right\|_{\mathbf{L}^{q}(\omega_T)}
    \leq C_2\left(\|\mathbf{u}_{\epsilon_m}-\mathbf{u}\|_{\mathbf{L}^{q}(\omega_T)}+\|\mathbf{\nabla}(\tilde{p}^h_{\epsilon_m}-\tilde{p}^h)\|_{\mathbf{L}^{q}(\omega_T)}\right).
\end{equation*}
The assertion follows by the application of
(\ref{str-conv-g}) with $s=\gamma=q$ and (\ref{conv-ph-em-str}) with $a=q$, and observing that always $q<q^{\ast}$ for any $q\geq 1$.

$\bullet\ \limsup_{m\to\infty}(|J_2|+|J_6|)=0$. In fact, by Hölder's inequality,
\begin{equation*}
\begin{split}
|J_2|+|J_6|\leq &\,
\|\mathbf{u}_{\epsilon_m}\otimes\mathbf{u}_{\epsilon_m}\Phi_{\epsilon_m}(|\mathbf{u}_{\epsilon_m}|)-\mathbf{u}\otimes\mathbf{u}\|_{\mathbf{L}^{1}(\omega_T)}
\|\mathbf{\nabla}\left(\mathcal{T}_{m,j}(\mathbf{w}_{\epsilon_m})\,\xi\right)\|_{\mathbf{L}^{\infty}(\omega^{\xi}_T)} +\\
&
\,\|p^2_{\epsilon_m}\|_{\mathbf{L}^{1}(\omega_T)}
\|\mathrm{div}\left(\mathcal{T}_{m,j}(\mathbf{w}_{\epsilon_m})\,\xi\right)\|_{\mathbf{L}^{\infty}(\omega^{\xi}_T)}.
\end{split}
\end{equation*}
Then, using Hölder's inequality again and due to (\ref{unb-uxu-em}) and (\ref{bd2-p2-em}), we get\break $\limsup_{m\to\infty}(|J_2|+|J_6|)=0$ if both second multiplying terms on the right-hand side of the above inequality are finite. Indeed, by the application of (\ref{P2-LipT}) and (\ref{P3-LipT}) together with (\ref{v-em-chi}), we get
\begin{equation*}
\begin{split}
&\|\mathbf{\nabla}\left(\mathcal{T}_{m,j}(\mathbf{w}_{\epsilon_m})\,\xi\right)\|_{\mathbf{L}^{\infty}(\omega^{\xi}_T)}\leq \\
& \|\mathbf{\nabla}\mathcal{T}_{m,j}(\mathbf{w}_{\epsilon_m})\|_{\mathbf{L}^{\infty}(\omega^{\xi}_T)}+
C\|\mathcal{T}_{m,j}(\mathbf{w}_{\epsilon_m})\|_{\mathbf{L}^{\infty}(\omega^{\xi}_T)}\leq\\
&
C_1\left(\lambda_{m,j}+\frac{\|\mathbf{v}_{\epsilon_m}-\mathbf{v}\|_{\mathbf{L}^{1}(E_{m,j})}}{\rho_{m,j}\delta_{\rho_{m,j},\omega^{\xi}_T}^{N+3}}\right)+
C_2\left(1+\frac{\|\mathbf{v}_{\epsilon_m}-\mathbf{v}\|_{\mathbf{L}^{1}(E_{m,j})}}{\rho_{m,j}\delta_{\rho_{m,j},\omega^{\xi}_T}^{N+2}}\right).
\end{split}
\end{equation*}
From (\ref{convg-La}) and (\ref{conv-ph-em-str}), the last with $a=q$, $\mathbf{v}_{\epsilon_m}-\mathbf{v}$ is uniformly bounded in
$\mathbf{L}^{1}(E_{m,j})$. %
On the other hand, for a fixed $j\in\mathds{N}$, the sequence
$\lambda_{m,j}$ lies in the interval $\left[2^{2^j},2^{2^{j+1}}\right]$ and, as a consequence, the sequence $\rho_{m,j}=\lambda_{m,j}^{2-q}$  is uniformly bounded from above, which, in turn, by (\ref{delta}) and (\ref{metric}), implies
\begin{equation*}
\inf_{m\in\mathds{N}}\delta_{\rho_{m,j},\omega^{\xi}_T}>0.
\end{equation*}
Analogously, we prove that also $\|\mathrm{div}\left(\mathcal{T}_{m,j}(\mathbf{w}_{\epsilon_m})\,\xi\right)\|_{\mathbf{L}^{\infty}(\omega^{\xi}_T)}$ is finite.

$\bullet\ \limsup_{m\to\infty}(|J_3|+|J_7|)=0$. By Hölder´s inequality and (\ref{bd2-p3-em})
\begin{equation*}
\begin{split}
  |J_3|+|J_7|&\leq \left(\alpha\||\mathbf{u}_{\epsilon_m}|^{\sigma-2}\mathbf{u}_{\epsilon_m}-|\mathbf{u}|^{\sigma-2}\mathbf{u}\|_{\mathbf{L}^{1}(\omega_T)}+
\|\mathbf{\nabla} p^3_{\epsilon_m}\|_{\mathbf{L}^{1}(\omega_T)}\right)\|\mathcal{T}_{m,j}(\mathbf{w}_{\epsilon_m})\|_{\mathbf{L}^{\infty}(\omega^{\xi}_T)}  \\
  & \leq C\||\mathbf{u}_{\epsilon_m}|^{\sigma-2}\mathbf{u}_{\epsilon_m}-|\mathbf{u}|^{\sigma-2}\mathbf{u}\|_{\mathbf{L}^{\sigma'}(\omega_T)}\|\mathcal{T}_{m,j}(\mathbf{w}_{\epsilon_m})\|_{\mathbf{L}^{\infty}(\omega^{\xi}_T)}.
\end{split}
\end{equation*}
Arguing as in the previous case, we can show that, for each $j\in\mathds{N}$, $\mathcal{T}_{m,j}(\mathbf{w}_{\epsilon_m})$ is uniformly bounded in $\mathbf{L}^{\infty}(\omega^{\xi}_T)$. %
Then, by the application of (\ref{str-conv-u-sig'}), it follows that $\limsup_{m\to\infty}(|J_3|+|J_7|)=0$.

$\bullet\ \limsup_{m\to\infty}(|J_5|+|J_9|)\leq C 2^{-\frac{j}{q}}$. %
By the definition of the Lipschitz truncation (see (\ref{Lip-trunc})) together with the fact that $\mathrm{div}\mathbf{w}_{\epsilon_m}=0$ (see (\ref{v-em-chi})), we can write
\begin{equation*}
J_5=\int_{\omega_T^{\xi}\cap E_{m,j}}p^1_{\epsilon_m}\mathrm{div}\mathcal{T}_{m,j}(\mathbf{w}_{\epsilon_m})\,d\mathbf{x}dt.
\end{equation*}
Next we use Hölder's inequality, (\ref{convg-S-b'}) and (\ref{bd2-p1-em}) together with (\ref{P2-LipT}). %
Then, arguing as in the cases for $|J_1|+|J_4|$ and $|J_2|+|J_6|$, we have
\begin{equation*}
\begin{split}
  \limsup_{m\to\infty}|J_5|&\leq C_1\limsup_{m\to\infty}\|\nabla\mathcal{T}_{m,j}(\mathbf{w}_{\epsilon_m})\|_{\mathbf{L}^{q}(\omega_T^{\xi}\cap E_{m,j})} \\
  & \leq
  C_1\limsup_{m\to\infty}\mathcal{L}_{N+1}(E_{m,j})^{\frac{1}{q}}\|\nabla\mathcal{T}_{m,j}(\mathbf{w}_{\epsilon_m})\|_{\mathbf{L}^{\infty}(\omega_T^{\xi})} \\
  & \leq
  C_2\limsup_{m\to\infty}\left[\mathcal{L}_{N+1}(E_{m,j})^{\frac{1}{q}}\left(\lambda_{m,j}+\frac{\|\mathbf{v}_{\epsilon_m}-\mathbf{v}\|_{\mathbf{L}^{1}(E_{m,j})}}{\rho_{m,j}\delta_{\rho_{m,j},\omega^{\xi}_T}^{N+3}}\right)\right]
 \\
  & \leq C_2\limsup_{m\to\infty}\left(\mathcal{L}_{N+1}(E_{m,j})^{\frac{1}{q}}\lambda_{m,j}\right).
\end{split}
\end{equation*}
Next, by the definition of $E_{m,j}$ (see (\ref{E-mj})) and using (\ref{lsup-FHI}), (\ref{Leb-G}), (\ref{bd-g-em}) and
(\ref{un-b-v-em})-(\ref{un-b-T-p1-em}) by this order, we get
$\limsup_{m\to\infty}|J_5|\leq C 2^{-\frac{j}{q}}$.

For $J_9$, we have by using (\ref{P4-LipT}) together with the definition of $\rho_{m,j}$ (see (\ref{rho-mj})) and arguing as we did above for $|J_5|$,
\begin{equation*}
\begin{split}
  \limsup_{m\to\infty}|J_9| & \leq  C\limsup_{m\to\infty}\left[\lambda_{m,j}^{q-2}\mathcal{L}_{N+1}(E_{m,j})
  \left(\lambda_{m,j}+\frac{\|\mathbf{v}_{\epsilon_m}-\mathbf{v}\|_{\mathbf{L}^{1}(E_{m,j})}}{\rho_{m,j}\delta_{\rho_{m,j},\omega^{\xi}_T}^{N+3}}\right)^2\right]\\
                            &   \leq C\limsup_{m\to\infty}\left(\lambda_{m,j}^{q-1}\mathcal{L}_{N+1}(E_{m,j})\right) \\
                            & \leq C 2^{-j}.
\end{split}
\end{equation*}
Then observing that $q>1$, it follows that $\limsup_{m\to\infty}|J_9|\leq C 2^{-\frac{j}{q}}$.

\bigskip\noindent
Throughout the above bullets, we have proven the claim (\ref{distpa7-t-der}) is true. %
On the other hand, arguing as we did for $|J_5|$, we can prove also that, for a fixed $j$,
\begin{equation}\label{distpa8-t-der}
\limsup_{m\to\infty}\left|\int_{E_{m,j}}\left(|\mathbf{\nabla}\mathbf{u}_{\epsilon_m}|^{q-2}\mathbf{\nabla}\mathbf{u}_{\epsilon_m}-\mathbf{S}\right):\mathbf{\nabla}\mathcal{T}_{m,j}(\mathbf{w}_{\epsilon_m})\,\xi\,d\mathbf{x}dt\right|
\leq C2^{-\frac{j}{q}}.
\end{equation}
In consequence, from the definition of $\mathcal{T}_{m,j}$ (see (\ref{Lip-trunc})), (\ref{distpa7-t-der}) and (\ref{distpa8-t-der}), we prove that
\begin{equation}\label{distpa9-t-der}
\limsup_{m\to\infty}\left|\int_{\omega_T\setminus E_{m,j}}\left(|\mathbf{\nabla}\mathbf{u}_{\epsilon_m}|^{q-2}\mathbf{\nabla}\mathbf{u}_{\epsilon_m}-\mathbf{S}\right):\mathbf{\nabla}\mathbf{w}_{\epsilon_m}\,\xi\,d\mathbf{x}dt\right|
\leq C2^{-\frac{j}{q}}.
\end{equation}
Using the definition of $\mathbf{w}_{\epsilon_m}$ (see (\ref{v-em-chi})) and the strong convergence property of $\tilde{p}^h_{\epsilon_m}$ (see (\ref{conv-ph-em-str})), it can be derived from (\ref{distpa9-t-der}) that
\begin{equation}\label{distpa10-t-der}
\limsup_{m\to\infty}\left|\int_{\omega_T\setminus E_{m,j}}\left(|\mathbf{\nabla}\mathbf{u}_{\epsilon_m}|^{q-2}\mathbf{\nabla}\mathbf{u}_{\epsilon_m}-\mathbf{S}\right):\mathbf{\nabla}(\mathbf{u}_{\epsilon_m}-\mathbf{u})\,\xi\,d\mathbf{x}dt\right|
\leq C2^{-\frac{j}{q}}.
\end{equation}
Now, by (\ref{distpa10-t-der}) and (\ref{lsup-FHI}), for each $j\in\mathds{N}$ we can find  a number $m_j\in\mathds{N}$ such that
\begin{eqnarray}
% \nonumber to remove numbering (before each equation)
  & \label{est1-subs-mj}& \displaystyle
\left|\int_{\omega_T\setminus E_{m_j,j}}\left(|\mathbf{\nabla}\mathbf{u}_{\epsilon_{m_j}}|^{q-2}\mathbf{\nabla}\mathbf{u}_{\epsilon_{m_j}}-\mathbf{S}\right):\mathbf{\nabla}(\mathbf{u}_{\epsilon_{m_j}}-\mathbf{u})\,\xi\,d\mathbf{x}dt\right|
\leq C2^{-\frac{j}{q}}, \\
  & \label{est2-subs-Fmj}&
\mathcal{L}_{N+1}(F_{m_j,j})\leq C2^{-j}, \\
  & \label{est2-subs-Hmj}& \mathcal{L}_{N+1}(H_{m_j,j})\leq C2^{-j}, \\
  & \label{est2-subs-Imj} &\mathcal{L}_{N+1}(I_{m_j,j})\leq C2^{-j}.
\end{eqnarray}
Setting $\xi_j:=\xi\chi_{\omega_T\setminus E_{m_{j},j}}$, where $\chi_{\omega_T\setminus E_{m_{j},j}}$ denotes the characteristic function of the set  $\omega_T\setminus E_{m_{j},j}$, it can be proved (\emph{cf.}~\cite[pp. 36-37]{DRW-2010}), using (\ref{Leb-G}), (\ref{est2-subs-Fmj})-(\ref{est2-subs-Imj}) and the fact that $\lambda_{m_j,j}\geq 1$, that
\begin{equation}\label{con-xi-j}
\xi_j\to \xi\quad\mbox{a.e. in}\quad \omega_T\quad\mbox{as}\ j\to\infty.
\end{equation}
From (\ref{con-xi-j}), (\ref{convg-La}) and (\ref{convg-S-b'}), we have, by appealing to Lebesgue's theorem of dominated convergence, that
\begin{eqnarray}
% \nonumber to remove numbering (before each equation)
  &  \mathbf{\nabla}\mathbf{u}\,\xi_j\to\mathbf{\nabla}\mathbf{u}\,\xi\quad\mbox{strongly in}\quad\mathbf{L}^{q}(\omega_T),\quad\mbox{as}\ j\to\infty, & \label{str1-f}\\
  &  \mathbf{S}\,\xi_j\to\mathbf{S}\,\xi\quad\mbox{strongly in}\quad\mathbf{L}^{q'}(\omega_T),\quad\mbox{as}\ j\to\infty. \label{str2-f} &
\end{eqnarray}
Then, from (\ref{est1-subs-mj}) and (\ref{str1-f})-(\ref{str2-f}), and appealing once more to (\ref{convg-La}), (\ref{convg-S-b'}) and Lebesgue's theorem of dominated convergence, we obtain
\begin{equation}\label{est1-lim-mj}
\lim_{j\to\infty}
\int_{\omega_T}|\mathbf{\nabla}\mathbf{u}_{\epsilon_{m_j}}|^{q}\,\xi_j\,d\mathbf{x}dt=
\int_{\omega_T}\mathbf{S}:\mathbf{\nabla}\mathbf{u}\,\xi\,d\mathbf{x}dt.
\end{equation}
Finally, taking into account (\ref{convg-La}), (\ref{convg-S-b'}), (\ref{con-xi-j}) and (\ref{est1-lim-mj}), we can apply the local Minty trick (\emph{cf.} \cite[Lemma A.2]{Wolf-2007}) to establish that $\mathbf{S}\,\xi=|\mathbf{\nabla}\mathbf{u}|^{q-2}\mathbf{\nabla}\mathbf{u}\,\xi$ a.e. in $\omega_T$. Due to the arbitrariness of $\xi$, $\mathbf{S}=|\mathbf{\nabla}\mathbf{u}|^{q-2}\mathbf{\nabla}\mathbf{u}$ a.e. in $\omega_T$ and the proof of Theorem~\ref{th-exst-wsqv} is concluded. $\hfill \blacksquare$

\section{Remarks}\label{Sect-Rem}

In Theorem~\ref{th-exst-wsqv} we have proved the existence of weak solutions, in the sense of Definition~\ref{weak-sol-vq}, to the problem (\ref{geq1-inc})-(\ref{geq1-bc}) for any
\begin{equation*}
q>\frac{2N}{N+2}\qquad\mbox{and}\qquad \sigma>1.
\end{equation*}
It is only left open the case of $1<q\leq\frac{2N}{N+2}$ for $N\geq 3$. %
But with the methods at our present disposal it seems to be very difficult to prove this case, because the compact imbedding
$\mathrm{W}^{1,q}(\Omega)\hookrightarrow\hookrightarrow \mathrm{L}^2(\Omega)$, which holds only for $q>\frac{2N}{N+2}$, is fundamental in many steps of our proof.

The result established in Theorem~\ref{th-exst-wsqv} is still valid if we consider an extra stress tensor with a $q$-structure satisfying to general growth and coercivity conditions. %
Indeed the proof still holds with minor changes if we assume that the diffusion term $|\mathbf{\nabla}\mathbf{u}|^{q-2}\mathbf{\nabla}\mathbf{u}$ in
(\ref{geq1-vel}) is replaced by a tensor $\mathbf{T}\equiv\mathbf{T}(\mathbf{x},t,\mathbf{D})$ ($\mathbf{D}$ is the symmetric part of $\mathbf{\nabla}\mathbf{u}$) satisfying to
\begin{itemize}
\item $|\mathbf{T}(\mathbf{x},t,\mathbf{A})|\leq C_1|\mathbf{A}|^{q-1}+f_1$ for all
$\mathbf{A}$ in $\mathds{M}^n_{\mathrm{sym}}$, for a.a. $(\mathbf{x},t)$ in $Q_T$ and for any function $f_1$ in $\mathrm{L}^{q'}(Q_T)$ with $f_1\geq 0$;
\item $\mathbf{T}(\mathbf{x},t,\mathbf{A}):\mathbf{A}\geq C_2|\mathbf{A}|^{q}-f_2$ for all
$\mathbf{A}$ in $\mathds{M}^n_{\mathrm{sym}}$, for a.a. $(\mathbf{x},t)$ in $Q_T$ and for any function $f_2$ in $\mathrm{L}^{1}(Q_T)$ with $f_2\geq 0$;
\end{itemize}
where $C_1$ and $C_2$ denote positive constants and
$\mathds{M}^n_{\mathrm{sym}}$ is the vector space of all symmetric $n\times n$ matrices, which is equipped with the scalar product $\mathbf{A}:\mathbf{B}$ and
norm $|\mathbf{A}|=\sqrt{\mathbf{A}:\mathbf{A}}$. %

It is possible to consider unbounded domains with no restriction on the size and shape of $\Omega$. In this case, proceeding as in~\cite[Section 3]{Wolf-2007}, we can prove the regularized problem (\ref{eq2-inc-e})-(\ref{eq1-bc-u-e}) has a unique weak solution for such $\Omega$. %
As a consequence the original problem has a solution for these domains as well.

The uniqueness of weak solutions is, as is well known, an open problem for the generalized Navier-Stokes problem (without damping) for values of $q\leq 2$. By adapting \cite[Théorème 2.5.2]{Lions-1969}, we can prove the weak solution to the problem (\ref{geq1-inc})-(\ref{geq1-bc}) is unique under more restrictive conditions that we have needed to prove the existence. In fact, assuming that $q\geq\frac{N+2}{N}$, replacing the diffusion term $\mathbf{div}\left(|\mathbf{\nabla}\mathbf{u}|^{q-2}\mathbf{\nabla}\mathbf{u}\right)$ in (\ref{geq1-vel}) by $\mathbf{div}\left(|\mathbf{\nabla}\mathbf{u}|^{q-2}\mathbf{\nabla}\mathbf{u}\right)+\mathbf{\triangle}\mathbf{u}$ and having in mind the damping term satisfies to (\ref{gin-xi-eta}), it is possible to prove the uniqueness of weak solution to this modified problem (\ref{geq1-inc})-(\ref{geq1-bc}) (see \cite[Theorem 2]{AMFM-2010}).

A completely different issue, is the important question about the qualitative properties of the weak solutions to the problem (\ref{geq1-inc})-(\ref{geq1-bc}). In this scope we are mainly interested in the extinction in a finite time, once that
the confinement of the weak solutions in a space domain is a much more delicate matter and remains an open problem, with the exceptions of the stationary Stokes and Navier-Stokes problems (\emph{cf.}~\cite{CRM-2002}--\cite{AA-2010} and \cite{AMFM-2010}). %
Proceeding as in \cite[Theorem 3] {AMFM-2010}, letting $\mathbf{u}$ be a weak solution to the problem (\ref{geq1-inc})-(\ref{geq1-bc}) in the sense of Definition~\ref{th-exst-wsqv} and assuming that (\ref{eq1-vel-u0}) and one of the following conditions hold:
\begin{enumerate}
  \item $q<2$; or
  \item $1<\sigma<2$;
\end{enumerate}
then we can prove the following assertions are true with minor changes in the proofs:
\begin{itemize}
  \item If $\textbf{f}=\textbf{0}$ a.e. in $Q_T$,  then there exists
$t^{\ast}_{(i)}>0$ such that $\textbf{u}(\textbf{x},t)=\textbf{0}$ a.e. in $\Omega$ and for all $t\geq t^\ast_{(i)}$, for $i=1,2$;
  \item Let $\textbf{f}\not=\textbf{0}$ and assume that exist positive constants $\epsilon_{(i)}$ and (positive) times $t^{\mathbf{f}}_{(i)}$, for $i=1,\ 2$, such that, for almost all $t\in[0,T]$,
\begin{equation*}\label{cond-f-++}
\|\textbf{f}(t)\|_{\mathbf{L}^{q'}(\Omega)} \leq \epsilon_{(1)} %
\left(1-\frac{t}{{t}^{\mathbf{f}}_{(1)}}\right)_{+}^{\theta_{(1)}}
\quad\mbox{if}\quad \frac{Nq}{N-q}\leq q<2\,,
\quad
\theta_{(1)}=\frac{q-1}{2-q},
\end{equation*}
or %
\begin{equation*}\label{cond-f-+}
\|\textbf{f}(t)\|_{\mathbf{L}^{q'}(\Omega)} \leq \epsilon_{(2)} %
\left(1-\frac{t}{{t}^{\mathbf{f}}_{(2)}}\right)_{+}^{\theta_{(2)}}
\quad\mbox{if}\quad 1<\sigma<2\,,\quad
\theta_{(2)}=\frac{(q-1)[q(N+\sigma)-N\sigma]}{q^2(2-\sigma)}.
\end{equation*}
Then there, there  exist positive constants $\epsilon^{0}_{(i)}$ such
that $\textbf{u}=\textbf{0}$ a.e. in $\Omega$ and for all $t\geq t^{\textbf{f}}_{(i)}$ provided $0<\epsilon_{(i)}\leq\epsilon^{0}_{(i)}$, for $i=1,2$.
\end{itemize}
Note that the subscripts $_{(i)}$ are used above in the sense to relate each result to the different condition $(i)$ written before.


\begin{thebibliography}{99}
%
% The \bibitem commands:
% Please follow "Notice to Authors" for referencing.  You
% must specify bold and italic fonts yourself.

\bibitem{CRM-2002} %
S.N. Antontsev,  J.I. Díaz and H.B. de Oliveira. %
On the confinement of a viscous fluid by means of a feedback external field. %
\emph{C. R. Méc. Acad. Sci. Paris}, \textbf{330} (2002), 797--802.

\bibitem{JMFM-2004} %
S.N. Antontsev,  J.I. Díaz and H.B. de Oliveira. %
Stopping a viscous fluid by a feedback dissipative field. I. The stationary Stokes problem. %
\emph{J. Math. Fluid Mech.} \textbf{6} (2004), no. 4, 439--461.

\bibitem{RLMA-2004} %
S.N. Antontsev,  J.I. Díaz and H.B. de Oliveira. %
Stopping a viscous fluid by a feedback dissipative field. II. The stationary Navier-Stokes problem. %
\emph{Atti Accad. Naz. Lincei Cl. Sci. Fis. Mat. Natur. Rend. Lincei (9) Mat. Appl.} \textbf{15} (2004), no. 3--4,  257--270.

\bibitem{AA-2010} %
S.N. Antontsev and H.B. de Oliveira. %
The Navier-Stokes problem modified by an absorption term. %
\emph{Appl. Anal.} \textbf{89} (2010), no. 12, 1805--1825.

\bibitem{JMAA-2011} %
S.N. Antontsev and H.B. de Oliveira. %
The Oberbeck-Boussinesq problem modified by thermo-absorption term. %
\emph{J. Math. Anal. Appl.} \textbf{379} (2011) no. 2, 802--817.

\bibitem{BL-1994} %
J.W. Barret and W.B. Liu. %
Finite element approximation of the parabolic $p-laplacian$. %
\emph{SIAM J. Numer. Anal.} (2) \textbf{31} (1994), 413--428.

\bibitem{Bae-1999} %
H.-O. Bae. %
Existence, regularity, and decay rate of solutions of non-Newtonian flow. %
\emph{J. Math. Anal. Appl.} \textbf{231} (1999), no. 2, 467--491.

\bibitem{BBC-1975}
Ph. Benilan, H. Brezis and M. Crandall. %
A semilinear equation in $L^{1}(R^{N})$. %
\emph{Ann. Scuola Norm. Sup. Pisa Cl. Sci.} \textbf{(4) 2} (1975), no. 4, 523--555.

\bibitem{B-1989} %
F. Bernis. %
Elliptic and parabolic semilinear problems without conditions at infinity. %
\emph{Arch. Rational Mech. Anal.} \textbf{106} (1989), 217--241.

\bibitem{CJ-2008}
X. Cai and Q. Jiu. %
Weak and strong solutions for the incompressible Navier-Stokes equations with damping. %
\emph{J. Math. Anal. Appl.} \textbf{343} (2008), no. 2, 799--809.

\bibitem{CG-2011}
R. Carles and C. Gallo. %
\emph{Finite time extinction by nonlinear damping for the Schrodinger equation.} %
\emph{Comm. Partial Differential Equations} \textbf{36} (2011), no. 6, 961--975.

\bibitem{DRW-2010} %
L.~Diening, M.~R\.{u}\v{z}i\v{c}ka and J.~Wolf.
Existence of weak solutions for unsteady motions of generalized Newtonian fluids.
\emph{Ann. Scuola Norm. Sup. Pisa Cl. Csi.} \textbf{5}, IX (2010), 1--46.

\bibitem{FMS-1997} %
J.~Frehse, J.~Málek and M.~Steinhauer. %
An existence result for fluids with shear dependent viscosity-steady flows. %
\emph{Nonlinear Anal.}  \textbf{30} (1997), no. 5, 3041--3049.

\bibitem{FMS-2003} %
J.~Frehse, J.~Málek and M.~Steinhauer. %
On analysis of steady flows of fluids with shear dependent viscosity based on the Lipschitz truncation method. %
\emph{SIAM J. Math. Anal.} \textbf{34} (2003), no.5, 1064--1083.

\bibitem{Galdi-1994} %
G.P. Galdi. %
\emph{An introduction to the mathematical theory of the Navier-Stokes equations. Vol. I. Linearized steady problems.} %
Springer Tracts in Natural Philosophy, \textbf{38}. Springer-Verlag, New York, 1994.

\bibitem{L-1967} %
O.A.~Ladyzhenskaya. %
New equations for the description of motion of viscous incompressible fluids and solvability in the large of boundary value problem for them. %
\emph{Proc. Steklov Inst. Math.} \textbf{102} (1967), 95--118.

\bibitem{Lions-1969} %
J.-L.~Lions. %
\emph{Quelques mèthodes de résolution des problèmes aux limites non liniaires.} %
Dunod, Paris, 1969.

\bibitem{AMFM-2010} %
H.B. de Oliveira. %
On the influence of an absorption term in incompressible fluid flows.
In  \emph{Advances in Mathematical Fluid Mechanics}, Sringer-Verlag (2010), 409--424.

\bibitem{PZ-2008}
R. Pan and K. Zhao. %
The 3D compressible Euler equations with damping in a bounded domain. %
\emph{J. Differential Equations} \textbf{246} (2009), no. 2, 581--596.

\bibitem{SS-1992}
A new approach to the Helmholtz decomposition and the Neumann problem in Lq-spaces for bounded and exterior domains. Mathematical problems relating to the Navier-Stokes equation, 1--35.
\emph{Ser. Adv. Math. Appl. Sci.} \textbf{11}, World Sci. Publ., River Edge, NJ, 1992.

\bibitem{Simon-1987}
J.~Simon.
Compact sets in the space $L^p(0,T;B)$.
\emph{Ann. Mat. Pura Appl.} \textbf{146} (1987), no. 4, 65--96.

\bibitem{Stein-1970}
E.M. Stein. %
\emph{Singular Integrals and Diferentiability Properties of Functions}. %
Princeton University Press, 1970

\bibitem{True-1953}
C. Truesdell. %
Precise theory of the absorption and dispersion of forced plane infinitesimal waves according to the Navier-Stokes equations. %
\emph{J. Rational Mech. Anal.} \textbf{2} (1953). 643--741.

\bibitem{Wolf-2007}
J.~Wolf.
Existence of weak solutions to the equations of non-stationary motion of non-Newtonian fluids with shear rate dependent viscosity. %
\emph{J. Math. Fluid Mech.}  \textbf{9}  (2007),  no. 1, 104--138.

\bibitem{Zhikov-2009} %
V.~Zhikov.
New approach to the solvability of generalized Navier-Stokes equations. %
\emph{Funct. Anal. Appl.} \textbf{43}, no. 3 (2009), 190--207.

\bibitem{Zhou-2005}
Y. Zhou. %
Global existence and nonexistence for a nonlinear wave equation with damping and source terms.
\emph{Math. Nachr.} \textbf{278} (2005), no. 11, 1341--1358.


\end{thebibliography}
\end{document}